\theoremstyle{plain}
\newtheorem{thm}{Theorem}[section]
\newtheorem{prop}[thm]{Proposition}
\newtheorem{lemma}[thm]{Lemma}
\newcommand{\deqno}{\refstepcounter{thm}(\thethm)}
\theoremstyle{definition}
\newtheorem{defn}[thm]{Definition}
\newtheorem{disc}[thm]{Discussion}
\newtheorem{note}[thm]{Notation}
\newtheorem{exam}[thm]{Example}
\newtheorem{axioms}[thm]{Axioms}
\theoremstyle{remark}
\newcounter{item}
\newenvironment{alist}[1][1]{\begin{list}
  {\textup{(\alph{item})}}{\usecounter{item} \setcounter{item}{#1}\addtocounter{item}{-1}
  \setlength{\itemsep}{0ex}
  \setlength{\topsep}{0ex} \setlength{\parsep}{0ex} \setlength{\labelwidth}{15mm}
  \setlength{\leftmargin}{10mm} } }{\end{list}}
\newenvironment{nlist}[1][1]{\begin{list}
  {\textup{(\arabic{item})}}{\usecounter{item} \setcounter{item}{#1}\addtocounter{item}{-1}
  \setlength{\itemsep}{0ex}
  \setlength{\topsep}{0ex} \setlength{\parsep}{0ex} \setlength{\labelwidth}{15mm}
  \setlength{\leftmargin}{10mm} } }{\end{list}}
\newcommand{\ba}{{\bf a}}
\newcommand{\bx}{{\bf x}}
\newcommand{\by}{{\bf y}}
\newcommand{\bv}{{\bf v}}
\newcommand{\Fe}{{\bf F}^e}
\newcommand{\tensor}{\otimes}
\newcommand{\Hom}{\textup{Hom}}
\newcommand{\Ext}{\textup{Ext}}
\newcommand{\im}{\textup{Im}}
\newcommand{\id}{\textup{id}}
\newcommand{\tr}{\textup{tr}}
\newcommand{\ra}{\rightarrow}
\newcommand{\isom}{\cong}
\newcommand{\wh}{\widehat}
\begin{document}

\title[Closure Operations That Induce Big C-M Modules]
{A Characterization of Closure Operations \\ That 
Induce Big Cohen-Macaulay Modules}

\author{Geoffrey D. Dietz}
\address{Department of Mathematics,
Gannon University, Erie, PA 16541}
\email{gdietz@member.ams.org}

\subjclass[2010]{Primary 13C14; Secondary 13A35}

\date{November 3, 2010}

\begin{abstract}
The intent of this paper is to present a set of axioms
that are sufficient for a closure operation 
to generate a balanced big Cohen-Macaulay module $B$ over a complete
local domain $R$. Conversely, we show that if such a $B$ exists over $R$,
then there exists a closure operation that satisfies the given axioms.
\end{abstract}

\maketitle

In equal characteristic,
the tight closure operation has been used to present proofs
of the existence of balanced big Cohen-Macaulay modules and
algebras. (See \cite{D}, \cite{Ho75}, \cite{Ho94}, and \cite{HH94} 
for example.)
Finding a closure operation in mixed characteristic that is 
powerful enough to produce big Cohen-Macaulay modules or algebras
has been an elusive yet very important goal for some time now, as 
the existence
of such a closure operation would yield major results
related to the homological conjectures in commutative algebra.

In this article, we present a list of seven axioms for a closure
operation defined for finitely generated modules over a complete
local domain $R$. After deriving some simpler consequences of
the axioms (including colon-capturing), 
we prove that a closure operation satisfying the axioms
implies the existence of a \textit{balanced big Cohen-Macaulay module}
over $R$. A balanced big Cohen-Macaulay module $B$ over $(R,\mathfrak{m})$ 
is an
$R$-module where every system of parameters in $R$ is
a regular sequence on $B$ (\emph{i.e.}, if $x_1,\ldots, x_{k+1}$ forms part of
a system of parameters in $R$ and $b\in (x_1,\ldots,x_k):_B x_{k+1}$,
then $b\in (x_1,\ldots,x_k)B$) and $\mathfrak{m}B\neq B$.
Our main tools for proving this theorem will be Hochster's
method of modifications \cite{Ho75} and the use of analogues of 
phantom extensions developed by Hochster and Huneke \cite{HH94}.

We then show that the existence of a
balanced big Cohen-Macaulay module over a complete local domain
can be used to create a closure operation that satisfies all
of the axioms. 

Finally, we demonstrate that all of the axioms are satisfied
by tight closure. We also examine the axioms in relation
to several other common closure operations associated with tight 
closure theory.

\section{The Closure Axioms and Consequences}

Let $R$ be a complete local domain, and let $N\subseteq M$ be
finitely generated modules.
An operation satisfying Axioms~(1)--(5) below will be 
called a \textit{closure operation} and will be
denoted by $N_M^{\natural}$ 
for the closure of $N$ 
within $M$. Throughout the paper we will call this the
\textit{$\natural$-closure} of $N$ in $M$. In Section~5,
we will show that tight closure, plus closure, Frobenius closure,
and solid closure all satisfy
Axioms~(1)--(6), while tight, plus, and solid closures 
(at least in positive characteristic) also satisfy
Axiom~(7).

\begin{axioms} \label{theaxioms}
Let $(R, \mathfrak{m})$ be a fixed complete local domain. Let
$I$ be an arbitrary ideal of $R$, 
and let $N$, $M$, and $W$ be arbitrary finitely generated
$R$-modules with $N\subseteq M$.
\begin{nlist}
  
  \item $N_M^\natural$ is a submodule of $M$ containing $N$. 
  
  \item $(N_M^\natural)_M^\natural = N_M^\natural$; \emph{i.e.}, the 
  $\natural$-closure of $N$ in $M$ is closed in $M$. 

  \item If $N\subseteq M\subseteq W$, 
  then $N_W^\natural \subseteq M_W^\natural$. 
  
  \item Let $f:M\ra W$ be a homomorphism. 
  Then $f(N_M^\natural)\subseteq f(N)_W^\natural$.
  
  \item If $N_{M}^{\natural}=N$, then $0_{M/N}^{\natural} = 0$.

  \item The ideals $\mathfrak{m}$ and $0$ are $\natural$-closed; \emph{i.e.},
  $\mathfrak{m}^\natural_R = \mathfrak{m}$ and $0^\natural_R = 0$.
 
  \item Let $x_1,\ldots, x_{k+1}$ be a partial system of parameters for
  $R$, and let $J=(x_1,\ldots,x_k)$. Suppose that there exists a surjective
  homomorphism $f:M\to R/J$ and $v\in M$ such that $f(v) = x_{k+1} + J$. Then
  $$
  (Rv)^\natural_M \cap \textrm{ker}\, f \subseteq (Jv)^\natural_M.
  $$

\end{nlist} \end{axioms}

The final axiom will be referred to as the \textit{generalized colon-capturing
property} due to the fact
that we will derive ordinary colon-capturing from it.

The following is a list of basic properties of a closure operation
that mimics part of the list given 
for tight closure of modules in \cite[Section 8]{HH90}. 

\begin{lemma} \label{proplist} Let $R$ be a complete local domain
possessing a closure operation that satisfies Axioms~\textnormal{(1)--(5)}.
In the following, $N$, $N'$, and $N_{i}\subseteq M_{i}$ are all 
$R$-submodules of the finitely generated $R$-module $M$. 
\begin{alist}

\item Let $N'\subseteq N\subseteq M$. Then $u\in N_{M}^{\natural}$ if and 
only if $u+N'\in (N/N')_{M/N'}^{\natural}$.

\item If $\mathcal{I}$ is a finite set,
$N=\bigoplus_{i\in\mathcal{I}}N_{i}$, and $M=\bigoplus_{i\in\mathcal{I}} 
M_{i}$, then we have $N_{M}^{\natural} = \bigoplus_{i\in\mathcal{I}} 
(N_{i})_{M_i}^{\natural}$. 

\item Let $\mathcal{I}$ be any set. If $N_{i}\subseteq M$ for all 
$i\in\mathcal{I}$, then $(\bigcap_{i\in\mathcal{I}} 
N_{i})_{M}^{\natural}\subseteq \bigcap_{i\in\mathcal{I}} 
(N_{i})_{M}^{\natural}$.

\item Let $\mathcal{I}$ be any set. If $N_{i}$ is $\natural$-closed
in $M$ for all $i\in\mathcal{I}$, then $\bigcap_{i\in\mathcal{I}} N_{i}$ 
is $\natural$-closed in $M$.

\item $(N_{1}+N_{2})_{M}^{\natural} = ((N_{1})_{M}^{\natural} + 
(N_{2})_{M}^{\natural})_{M}^{\natural}$. 
\end{alist}\end{lemma}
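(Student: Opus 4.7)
The plan is to handle each part by invoking the appropriate axioms directly; parts (b)--(e) follow by routine diagram-chasing once (a) is in hand, so I will focus the effort on (a).

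For part (a), I would prove both containments by applying Axiom~(4) to two different quotient maps. For the forward direction, apply Axiom~(4) to the natural surjection $\pi: M \to M/N'$: this gives $\pi(N_M^\natural) \subseteq \pi(N)_{M/N'}^\natural = (N/N')_{M/N'}^\natural$, which is exactly what is needed. The reverse direction is the more delicate step and is the main obstacle. The idea is to apply Axiom~(4) to the map $q : M/N' \to M/N_M^\natural$ (well-defined since $N' \subseteq N \subseteq N_M^\natural$). This yields $q\bigl((N/N')_{M/N'}^\natural\bigr) \subseteq q(N/N')_{M/N_M^\natural}^\natural$. Now $q(N/N')$ is the image of $N$ in $M/N_M^\natural$, which is zero. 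Thus the right side is $0_{M/N_M^\natural}^\natural$, and by Axiom~(2) the submodule $N_M^\natural$ is $\natural$-closed in $M$, so Axiom~(5) forces $0_{M/N_M^\natural}^\natural = 0$. Consequently any representative $u$ with $u + N' \in (N/N')_{M/N'}^\natural$ must already lie in $N_M^\natural$.

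For part (b), I would use the projections $\pi_i : M \to M_i$ and inclusions $\iota_i : M_i \to M$. Applying Axiom~(4) to each $\pi_i$ gives $\pi_i(N_M^\natural) \subseteq (N_i)_{M_i}^\natural$, hence $N_M^\natural \subseteq \bigoplus_i (N_i)_{M_i}^\natural$. For the reverse, Axiom~(4) applied to $\iota_i$ gives $\iota_i((N_i)_{M_i}^\natural) \subseteq (N_i)_M^\natural$, and since $N_i \subseteq N$ in $M$, Axiom~(3) yields $(N_i)_M^\natural \subseteq N_M^\natural$; summing gives the other inclusion. Part (c) is immediate from Axiom~(3) applied to each containment $\bigcap_i N_i \subseteq N_j$, and then intersecting over $j$. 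Part (d) follows from (c) since each $(N_i)_M^\natural = N_i$ when the $N_i$ are $\natural$-closed.

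For part (e), the containment $\supseteq$ comes from the facts that $N_j \subseteq N_1 + N_2$ implies $(N_j)_M^\natural \subseteq (N_1 + N_2)_M^\natural$ by Axiom~(3), so $(N_1)_M^\natural + (N_2)_M^\natural \subseteq (N_1+N_2)_M^\natural$; applying Axiom~(3) once more and then Axiom~(2) gives $((N_1)_M^\natural + (N_2)_M^\natural)_M^\natural \subseteq (N_1+N_2)_M^\natural$. The reverse $\subseteq$ uses Axiom~(1) to embed $N_1 + N_2 \subseteq (N_1)_M^\natural + (N_2)_M^\natural$ and then Axiom~(3). Overall, part (a) is the conceptually interesting statement, and it is essentially the assertion that Axiom~(5) forces $\natural$-closure to be compatible with passage to quotients by $\natural$-closed submodules; once that is observed, the remaining parts are formal.
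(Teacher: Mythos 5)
Your proposal is correct and follows essentially the same route as the paper: the key step in (a) is applying Axiom~(4) to the quotient map onto $M/N_M^\natural$ and then using Axioms~(2) and (5) to see that $0$ is $\natural$-closed there, exactly as in the paper (the paper merely reduces first to the case $N'=0$, a cosmetic difference), and parts (b)--(e) are argued with the same combinations of Axioms~(1), (3), and (4).
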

~
\begin{proof}
~
\begin{alist} 
\item Since $(M/N')/(N/N')\isom M/N$, it is enough to prove the
assertion for the case 
where $N'=0$. Let
$\pi:M\ra M/N$ be the natural surjection. If $u\in N_M^\natural$, then
Axiom~(4) implies that $u+N=\pi(u)\in (\pi(N))_{M/N}^\natural = 
0_{M/N}^\natural$. Conversely, suppose that $u+N\in 0_{M/N}^\natural$. 
By Axiom~(1), there is a natural homomorphism $M/N\ra M/N_M^\natural$.
Applying Axiom~(4) to this map yields $u+N_M^\natural\in
0_{M/N_M^\natural}^\natural$. By Axiom~(2) and Axiom~(5), 
$0$ is $\natural$-closed in
$M/N_M^\natural$, so $u+N_M^\natural$ is zero in the quotient $M/N_M^\natural$.

\item Let $\pi_i:M\ra M_i$ be the natural projection, and let
$\iota_i:M_i\ra M$ be the natural inclusion for all $i$. By Axiom~(4),
$$
\pi_i(N_M^\natural)\subseteq \pi_i(N)_{M_i}^\natural = (N_i)_{M_i}^\natural.
$$
Therefore, $N_M^\natural\subseteq \bigoplus_i
(N_i)_{M_i}^\natural$ in $M=\bigoplus_i M_i$. Conversely, we apply
Axiom~(4) to the map $\iota_i$ to see that $\iota_i((N_i)_{M_i}^\natural)
\subseteq \iota_i(N_i)_M^\natural$, which is contained in
$N_M^\natural$ by Axiom~(3). Thus, $\bigoplus_i (N_i)_{M_i}^\natural\subseteq
N_M^\natural$.

\item Let $u\in (\bigcap_{i\in\mathcal{I}} 
N_{i})_{M}^{\natural}$. By Axiom~(3), $u\in (N_{i})_{M}^{\natural}$
for all $i\in\mathcal{I}$.

\item Since each $N_{i}$ is $\natural$-closed, the conclusion follows 
from (c) and Axiom~(1). 

\item As $N_{i}\subseteq (N_{i})_{M}^{\natural}$ by Axiom~(1), 
Axiom~(3) implies that
$$(N_{1}+N_{2})_{M}^{\natural}\subseteq ((N_{1})_{M}^{\natural} + 
(N_{2})_{M}^{\natural})_{M}^{\natural}.$$
Conversely, $N_{i}\subseteq 
N_{1}+N_{2}$, with Axiom~(3), implies that 
$(N_{i})_{M}^{\natural}\subseteq (N_{1}+N_{2})_{M}^{\natural}$. 
Therefore, 
$(N_{1})_{M}^{\natural} + (N_{2})_{M}^{\natural} \subseteq
(N_{1}+N_{2})_{M}^{\natural}$. 
Axiom~(2) then yields
$$((N_{1})_{M}^{\natural} + 
(N_{2})_{M}^{\natural})_{M}^{\natural}\subseteq 
((N_{1}+N_{2})_{M}^{\natural})_{M}^{\natural} = 
(N_{1}+N_{2})_{M}^{\natural}.$$ 
\end{alist}~\end{proof}

Although we only need Axiom~(7)
in the form given in order to produce a 
balanced big Cohen-Macaulay module, we can derive a more general
property that does not require the map $M\to R/J$ to be surjective. 
In fact, if an operation satisfies Axioms~(1), (3), and (4), then
Axiom~(7) is equivalent to Lemma \ref{genaxiom7}. 

\begin{lemma}\label{genaxiom7}
Let $R$ be a complete local domain possessing an operation
that satisfies Axioms~(1), (3), (4), and (7).
Let $x_1,\ldots, x_{k+1}$ be a partial system of parameters for
$R$, and let $J=(x_1,\ldots,x_k)$. Suppose that there exists a 
homomorphism $f:M\to R/J$ such that $f(v) = x_{k+1} + J$. Then
$$
(Rv)^\natural_M \cap \textnormal{ker}\, f \subseteq (Jv)^\natural_M.
$$
\end{lemma}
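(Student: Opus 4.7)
The plan is to reduce the general statement to Axiom~(7) by enlarging $M$ so that the given map becomes surjective, and then transferring the resulting containment back along a retraction by means of Axiom~(4). Concretely, I would set $M' := M \oplus R$, with canonical inclusion $\iota : M \hookrightarrow M'$ into the first summand and canonical projection $\pi : M' \twoheadrightarrow M$, so that $\pi \circ \iota = \id_M$. Define $f' : M' \to R/J$ by $f'(m, r) = f(m) + r + J$; this is surjective since $f'(0,1) = 1+J$. Setting $v' := \iota(v) = (v,0)$, we have $f'(v') = f(v) = x_{k+1} + J$, so the hypotheses of Axiom~(7) are met for the triple $(M', v', f')$, yielding
$$(Rv')^\natural_{M'} \cap \ker f' \subseteq (Jv')^\natural_{M'}.$$

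Given $u \in (Rv)^\natural_M \cap \ker f$, the next step is to push $u$ into $M'$ and verify it meets the hypothesis of the displayed containment. Since $\iota$ is $R$-linear, $\iota(Rv) = R\iota(v) = Rv'$, so Axiom~(4) applied to $\iota$ gives $\iota(u) \in (\iota(Rv))^\natural_{M'} = (Rv')^\natural_{M'}$; and clearly $f'(\iota(u)) = f(u) = 0$, so $\iota(u) \in \ker f'$. Invoking the inclusion above, $\iota(u) \in (Jv')^\natural_{M'} = (\iota(Jv))^\natural_{M'}$. Applying Axiom~(4) to $\pi$, and using $\pi(\iota(Jv)) = Jv$ together with $\pi(\iota(u)) = u$, we conclude $u \in (Jv)^\natural_M$, as desired.

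The only real step of ingenuity is spotting the enlargement $M' = M \oplus R$, which canonically admits both a lift of $f$ to a surjection and a retraction onto $M$; the rest is two applications of Axiom~(4) sandwiching one application of Axiom~(7). The converse direction of the equivalence announced just before the lemma — that Lemma~\ref{genaxiom7} implies Axiom~(7) — is immediate by specialising to surjective $f$.
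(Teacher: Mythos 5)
Your proof is correct and takes essentially the same route as the paper: both pass to $M\oplus R$ with the surjection $(m,r)\mapsto f(m)+r+J$, note that $(v,0)$ maps to $x_{k+1}+J$, and apply Axiom~(7) there. The only (minor) difference is in transporting the conclusion back to $M$: the paper invokes Lemma~\ref{proplist}(b) to identify $(R(v,0))^\natural_{M\oplus R}$ and $(J(v,0))^\natural_{M\oplus R}$ as direct sums, whereas you apply Axiom~(4) to the inclusion $\iota$ and the retraction $\pi$, a slight streamlining of the same idea.
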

\begin{proof}
Given the setup above, suppose that 
$u\in (Rv)^\natural_M \cap \textrm{ker}\, f$. Define a new homomorphism
$\phi: M\oplus R \to R/J$ by $\phi(m,r) = f(m) + r + J$. The map $\phi$
is surjective as $\phi(0,1) = 1 + J$. Also, $\phi(v,0) = f(v) = x_{k+1}+J$.
Therefore, $\phi$ satisfies the assumptions of Axiom~(7), and we may
conclude that 
$$
(R(v,0))^\natural_{M\oplus R} \cap \textrm{ker}\, \phi \subseteq
(J(v,0))^\natural_{M\oplus R}.
$$
Lemma \ref{proplist}(b) implies that $(R(v,0))^\natural_{M\oplus R}
= (Rv)^\natural_M \oplus 0^\natural_R$ and that 
$(J(v,0))^\natural_{M\oplus R}
= (Jv)^\natural_M \oplus 0^\natural_R$. Thus, 
$(u,0) \in (Jv)^\natural_M \oplus 0^\natural_R$, and so $u\in (Jv)^\natural_M$
as desired.
\end{proof}

We can now derive a colon-capturing property analogous to one found in
tight closure theory.

\begin{prop}[colon-capturing]
Let $R$ be as in the previous lemma.
Let $x_1,\ldots,x_{k+1}$ be a partial system of parameters in $R$. Then
$(x_1,\ldots,x_k): x_{k+1} \subseteq (x_1,\ldots,x_k)^\natural_R.$
\end{prop}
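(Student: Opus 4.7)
The plan is to apply Lemma \ref{genaxiom7} to a carefully chosen triple $(M,v,f)$ so that its conclusion reads exactly $(J :_R x_{k+1}) \subseteq J^\natural_R$, where $J = (x_1,\ldots,x_k)$. The trick is to arrange that $\textnormal{ker}\, f$ equals the colon ideal while $Rv$ and $Jv$ come out to $R$ and $J$, respectively.

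Specifically, I would take $M = R$ and $v = 1$, and let $f : R \to R/J$ be the composition of multiplication by $x_{k+1}$ with the canonical surjection; explicitly, $f(r) = r x_{k+1} + J$. This is $R$-linear and sends $v$ to $x_{k+1} + J$, matching the hypothesis of Lemma \ref{genaxiom7}. (Note that the map $f$ is \emph{not} surjective, which is why the generalization provided by Lemma \ref{genaxiom7} is essential here; Axiom~(7) itself would not apply.) Under this choice, $\textnormal{ker}\, f = (J :_R x_{k+1})$, while $Rv = R$ and $Jv = J$. Axiom~(1) forces $(Rv)^\natural_R = R^\natural_R = R$, since that closure is a submodule of $R$ containing $R$. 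Thus Lemma \ref{genaxiom7} gives
$$
(J :_R x_{k+1}) \;=\; R \cap \textnormal{ker}\, f \;=\; (Rv)^\natural_R \cap \textnormal{ker}\, f \;\subseteq\; (Jv)^\natural_R \;=\; J^\natural_R,
$$
which is exactly colon-capturing.

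There is no genuine obstacle; Lemma \ref{genaxiom7} has already absorbed all of the work. The only subtlety worth flagging is avoiding the naive choice $v = x_{k+1}$ with $f$ the canonical surjection $R \to R/J$: that setup produces $Rv = (x_{k+1})$ and $Jv = J x_{k+1}$, so Lemma \ref{genaxiom7} would only yield a statement about $(Jx_{k+1})^\natural_R$ rather than $J^\natural_R$. Folding the multiplication by $x_{k+1}$ into $f$ rather than into $v$ is the small but crucial adjustment.
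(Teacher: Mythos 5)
Your proposal is correct and is essentially identical to the paper's proof: the paper also applies Lemma~\ref{genaxiom7} with $M=R$, $v=1$, and $f(r)=rx_{k+1}+J$, noting $(Rv)^\natural_R=R$ by Axiom~(1) and $\ker f = J:x_{k+1}$. Your added remark that $f$ need not be surjective (so the generalized Lemma~\ref{genaxiom7}, not Axiom~(7) itself, is what is required) is exactly the point of that lemma.
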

\begin{proof}
Apply the hypotheses above to Lemma \ref{genaxiom7} with $M=R$,
$J=(x_1,\ldots,x_k)$, $f:R\to R/J$ given by $f(r) = rx_{k+1} + J$,
and $v=1$. Then $f(1) = x_{k+1}+J$, $(Rv)^\natural_M = R$ by Axiom~(1), 
$\textrm{ker}\, f = J: x_{k+1}$, and $(Jv)^\natural_M = J^\natural_R$.
\end{proof}

\section{$\natural$-Phantom Extensions}

In this section we define a notion of phantom extensions
for a closure operation satisfying Axioms~(1)--(5).
Phantom extensions and module modifications were used in
\cite{HH94} to produce a new proof of the existence of big
Cohen-Macaulay modules in positive characteristic. Our study
of $\natural$-phantom extensions will lead to the existence of 
a balanced big Cohen-Macaulay module over $R$.

In \cite[Section 5]{HH94}, a map
$\alpha:N\ra M$ of finitely generated $R$-modules 
is a \textit{phantom extension} if there exists $c\in R$ but
not in any minimal prime 
such that for all $e\gg 0$,
there exists $\gamma_e:\Fe(M)\ra\Fe(N)$ such that $\gamma_e\circ \Fe(\alpha)
= c(\id_{\Fe(N)})$, where $\Fe$ is the iterated Frobenius functor. 

Via the Yoneda correspondence, every short exact
sequence 
$$
0\ra N\overset\alpha\ra M\ra Q\ra 0
$$
corresponds to a unique element
$\epsilon$ of $\Ext_R^1(Q,N)$. Let $P_\bullet$ be a projective resolution 
of $Q= M/\alpha(N)$. Then $\Ext_R^1(Q,N)$ is isomorphic to 
$H^1(\Hom_R(P_\bullet,N))$.

Given the map $\alpha:N\ra M$ and the corresponding
element $\epsilon$ of $\Ext_R^1(Q,N)$, Hochster and Huneke
called $\epsilon$ a \textit{phantom element}
if a cocycle representative of $\epsilon$ in $\Hom_R(P_1,N)$ is in the tight 
closure of 
$$
\im(\Hom_R(P_0,N)\ra \Hom_R(P_1,N))
$$ 
within $\Hom_R(P_1,N)$. In the 
case where $N=R$, Hochster and Huneke provide the following equivalence.

\begin{thm}[Theorem 5.13, \cite{HH94}] Let $R$ be a reduced
Noetherian ring of positive characteristic. An exact
sequence 
$$0\ra R\overset\alpha\ra M\ra Q\ra 0$$
is a phantom extension
if and only if the corresponding element $\epsilon$ in $\Ext_R^1(Q,R)$
is phantom in the sense described above. \end{thm}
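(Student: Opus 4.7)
The plan is to translate between two equivalent ways of describing the class $\epsilon$: the geometric ``splits up to $c$'' statement on the Frobenius-twisted sequence, and the cohomological ``cocycle in the tight closure of coboundaries'' statement, via the standard Yoneda dictionary. The single fact I would use repeatedly is that, for any short exact sequence $0 \to N \xrightarrow{\alpha} M \to Q \to 0$ with Yoneda class $\epsilon \in \Ext_R^1(Q,N)$ and any projective resolution $P_\bullet \to Q$ with differential $d$, the existence of a map $\gamma:M\to N$ satisfying $\gamma\circ\alpha = c\cdot\id_N$ is equivalent to $c\cdot\epsilon = 0$ in $\Ext_R^1(Q,N)$, and hence equivalent to the cocycle $c z \in \Hom_R(P_1, N)$ (where $z = \tilde\alpha_0\circ d$ for any lift $\tilde\alpha_0:P_0\to M$ of $P_0\to Q$) lying in $\im(\Hom_R(P_0, N) \to \Hom_R(P_1, N))$.

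To apply this to the Frobenius twist, I would use that $P_0$ and $P_1$ are free $R$-modules, so $\F^e(\Hom_R(P_i, R)) \isom \Hom_R(\F^e(P_i), R)$ canonically; under these identifications, $\F^e(z) = \F^e(\tilde\alpha_0)\circ\F^e(d):\F^e(P_1)\to R$ is the natural cocycle that plays the role of $z$ for the Frobenius-twisted setup, with lift $\F^e(\tilde\alpha_0):\F^e(P_0)\to \F^e(M)$. I will then show directly that the existence of $\gamma_e:\F^e(M)\to R$ with $\gamma_e\circ\F^e(\alpha) = c\cdot\id_R$ is equivalent to $c\cdot\F^e(z)$ lying in $\im(\Hom_R(\F^e(P_0), R) \to \Hom_R(\F^e(P_1), R))$. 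Requiring this for all $e\gg 0$ with a single $c$ is precisely the statement that $z$ lies in the tight closure of $\im(\Hom_R(P_0,R)\to \Hom_R(P_1,R))$ inside $\Hom_R(P_1,R)$, i.e., that $\epsilon$ is phantom.

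The forward direction is then a short calculation: given $\gamma_e$, set $g := \gamma_e\circ \F^e(\tilde\alpha_0)$ and observe $g\circ\F^e(d) = \gamma_e\circ\F^e(\alpha)\circ\F^e(z) = c\cdot\F^e(z)$, using $\tilde\alpha_0\circ d = \alpha\circ z$. For the reverse direction, given $g:\F^e(P_0)\to R$ with $g\circ\F^e(d) = c\cdot \F^e(z)$, I would show that $c\F^e(\tilde\alpha_0) - \F^e(\alpha)\circ g : \F^e(P_0)\to \F^e(M)$ vanishes on $\im \F^e(d)$ and hence descends to $\bar\rho:\F^e(Q)\to \F^e(M)$ with $\pi\circ\bar\rho = c\cdot\id_{\F^e(Q)}$; recovering $\gamma_e$ from $\bar\rho$ then uses that such a ``$c$-section'' on the quotient side is Yoneda-equivalent to a ``$c$-retraction'' on the submodule side. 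The main obstacle I anticipate is this last equivalence, which ultimately depends on the Frobenius-twisted sequence being sufficiently well-behaved at $R$ (in particular on $\F^e(\alpha)$ being injective); this is where the reducedness hypothesis on $R$ enters and where the most care is needed.
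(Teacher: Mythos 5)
A preliminary remark: the paper does not prove this statement itself --- it is quoted from [HH94, Theorem 5.13] and the surrounding machinery is only mirrored later in Discussion \ref{homconstr} and Lemma \ref{phcomp} --- so I am judging your sketch against the standard argument. Your forward direction is correct and essentially complete: from $\gamma_e$ with $\gamma_e\circ\Fe(\alpha)=c\,\id_R$, the functional $g=\gamma_e\circ\Fe(\tilde\alpha_0)$ satisfies $g\circ\Fe(d)=c\,\Fe(z)$, and since the $P_i$ are finitely generated projective, the identifications $\Hom_R(\Fe(P_i),R)\isom\Fe(P_i^\vee)$ together with right-exactness of $\Fe$ convert this, for all $e\gg 0$ with one $c$, into $z\in\im(P_0^\vee\ra P_1^\vee)^*_{P_1^\vee}$.

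The genuine gap is the last step of your converse. You produce $\bar\rho:\Fe(Q)\ra\Fe(M)$ with $\Fe(\pi)\circ\bar\rho=c\,\id_{\Fe(Q)}$ and then want to pass from this ``$c$-section'' to a ``$c$-retraction'' of $\Fe(\alpha)$. That passage needs $\ker\Fe(\pi)=\im\Fe(\alpha)$ to be free of rank one via $\Fe(\alpha)$, i.e.\ it needs $\Fe(\alpha)$ injective --- and that is precisely what is not available: $\Fe$ is only right exact, reducedness of $R$ by itself does not give injectivity of $\Fe(\alpha)$, and in fact injectivity of $\Fe(\alpha)$ for $e\gg0$ is itself a consequence of the phantom hypothesis, so as structured your argument is circular at exactly the point you flag. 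The gap can be closed in two ways. (i) Bypass $\Fe(Q)$ entirely: using injectivity of the original $\alpha$ and exactness of $P_1\ra P_0\ra Q\ra 0$, one has $M\isom\textup{coker}\bigl(P_1\xrightarrow{(d,-z)}P_0\oplus R\bigr)$, hence $\Fe(M)\isom\textup{coker}\bigl(\Fe(P_1)\ra\Fe(P_0)\oplus R\bigr)$ with $\Fe(\alpha)(1)$ the class of $(0,1)$; the functional $(x,r)\mapsto g(x)+cr$ kills the relations because $g\circ\Fe(d)=c\,\Fe(z)$, so it descends to the desired $\gamma_e$ with $\gamma_e(\Fe(\alpha)(1))=c$. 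This is the explicit free-presentation route of [HH94], the same computation that reappears in Discussion \ref{homconstr} and Lemma \ref{phcomp}, and it never needs $\Fe(\alpha)$ to be injective. (ii) Alternatively, keep your route but first prove injectivity: the same presentation gives $\ker\Fe(\alpha)=\Fe(z)(\ker\Fe(d))$, so $c\cdot\ker\Fe(\alpha)=g\bigl(\Fe(d)(\ker\Fe(d))\bigr)=0$, and since $R$ is reduced, $c$ (being outside every minimal prime) is a nonzerodivisor, whence $\Fe(\alpha)$ is injective for the relevant $e$; this is the only place reducedness genuinely enters, and it requires the displayed identification of $\ker\Fe(\alpha)$, not just the hypothesis that $R$ is reduced.
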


We will use this latter property to define \textit{$\natural$-phantom 
extensions} with respect to $\natural$-closure.

\begin{defn} Let $R$ be a complete local domain
possessing a closure operation satisfying Axioms~(1)--(5).
Let $M$ be a finitely generated $R$-module 
and $\alpha:R\ra M$ an injective $R$-linear map. With $Q=M/\alpha(R)$, 
we have an induced short exact sequence 
$$0\ra R\overset\alpha\ra M\ra Q\ra 0.$$
Let $\epsilon\in\Ext_R^1(Q,R)$ be the element corresponding to this
short exact sequence via the Yoneda correspondence. 
Use $(-)^\vee$ to denote the operation $\Hom_R(-,R)$.
If $P_\bullet$ is
a projective resolution of $Q$ consisting of finitely generated projective
modules $P_i$, then we will say that $\epsilon$ is 
\textit{$\natural$-phantom} if a cocycle representing 
$\epsilon$ in $P_1^\vee$ is in
$\im(P_0^\vee\ra P_1^\vee)^\natural_{P_1^\vee}$. We will
call $\alpha$ a \textit{$\natural$-phantom extension} of $R$ if $\epsilon$ 
is $\natural$-phantom. 
\end{defn}

\begin{disc}
Since the choice of projective resolution above is not canonical, we
must demonstrate that whether $\epsilon\in\Ext_R^1(Q,R)$ is
phantom or not is independent of the choice of $P_\bullet$. Let 
$Q_\bullet$ be another projective resolution of $Q$ consisting
of finitely generated projective modules. 
Continue using $(-)^\vee$ to denote $\Hom_R(-,R)$.

Given $\epsilon$ in
$\Ext_R^1(Q,R)$ representing the short exact sequence 
$$0\ra R\overset\alpha\ra M\ra Q\ra 0,$$ 
let $\phi\in P_1^\vee$ and $\phi'\in Q_1^\vee$ be corresponding cocycles. 
Given 
$\phi\in\im(P_0^\vee\ra P_1^\vee)^\natural_{P_1^\vee}$, 
we will show that $\phi'\in\im(Q_0^\vee\ra Q_1^\vee)^\natural_{Q_1^\vee}$
as well.

We can lift the identity map $Q\overset\id\ra Q$ to a map of complexes
as follows:
$$\xymatrix{\cdots\ar[r] & P_1\ar[r] & P_0\ar[r] & Q\ar[r] & 0 \\
\cdots\ar[r] & Q_1\ar[r]\ar[u]_f & Q_0\ar[r]\ar[u] & 
Q\ar[r]\ar[u]_\id & 0} \leqno(\#)
$$
Applying $(-)^\vee$ to $(\#)$ yields the commutative diagram
$$\xymatrix{\cdots & P_1^\vee\ar[l]\ar[d]^{f^\vee} & P_0^\vee\ar[l]\ar[d] 
& Q^\vee\ar[l]\ar[d]^\id & 0\ar[l] \\
\cdots & Q_1^\vee\ar[l] & Q_0^\vee\ar[l] & Q^\vee\ar[l] & 0\ar[l]} 
\leqno(\#^\vee)
$$
Via the Yoneda correspondence, $\phi\mapsto\phi'$ under the map $f^\vee$.
Since the element $\phi$ is in 
$\im(P_0^\vee\ra P_1^\vee)_{P_1^\vee}^\natural$, applying
Axiom~(4) to $f^\vee$ gives us 
$$\phi'=f^\vee(\phi)\in \im(P_0^\vee\ra Q_1^\vee)_{Q_1^\vee}^\natural.$$ 
Since
$\im(P_0^\vee\ra Q_1^\vee)\subseteq\im(Q_0^\vee\ra Q_1^\vee)$,
Axiom~(3) shows that $\phi'$ is in 
$\im(Q_0^\vee\ra Q_1^\vee)_{Q_1^\vee}^\natural$,
as claimed.
\end{disc}

\begin{disc}\label{homconstr}
Now that we have a well-defined notion of $\natural$-phantom extensions,
we demonstrate more explicitly what it means
for a module to be \mbox{$\natural$-phantom}. Let $(R,\mathfrak{m})$ be
a complete local domain with a closure operation satisfying the axioms. 
For a finitely generated $R$-module $M$ and
an injection $R\overset\alpha\ra M$, if we set $Q=M/\alpha(R)$,
we have the short exact sequence 
$$
0\ra R\overset\alpha\ra M\ra Q\ra 0. \leqno\deqno\label{SES}
$$
Let $w_1=\alpha(1)$, and let $w_2,\ldots,w_{n}$ be elements of $M$ such that
the images $\overline{w_2},\ldots,\overline{w_{n}}$ in $Q$ form a
minimal generating set for $Q$.
Then $w_1,\ldots,w_n$ generate $M$. Let 
$$
R^m\overset{\nu}\ra R^{n-1}\overset{\mu}\ra Q\ra 0 
\leqno\deqno\label{Qres}
$$
be a minimal free presentation of $Q$, 
where $R^{n-1}$ has 
basis $f_2,\ldots,f_n$ such that $\mu$ is
given by $f_i\mapsto \overline{w_i}$. We can also choose a basis
for $R^m$ such that $\nu$ is given by the $(n-1)\times m$ matrix
$$
\nu:= \left(
\begin{matrix}
b_{21} & \cdots & b_{2m} \\
\vdots & \ddots & \vdots \\
b_{n1} & \cdots & b_{nm}
\end{matrix}\right)
$$
where the entries $b_{ij}$ are in $\mathfrak{m}$. 
We can then construct the diagram
$$
\xymatrix{ 
R^m\ar[r]^{\nu_1}\ar[d]^{\id} & R^n\ar[r]^{\mu_1}\ar[d]^{\pi} 
  & M\ar[r]\ar@{->>}[d] & 0 \\
R^m\ar[r]^{\nu} & R^{n-1}\ar[r]^{\mu} & Q\ar[r] & 0 } 
\leqno\deqno\label{Mres}
$$
where $R^n$ has basis $\epsilon_1,\ldots,\epsilon_n$,
$\pi(\epsilon_i) = f_i$ for $i>1$, and $\pi(\epsilon_1)=0$.
The map $\mu_1$ is given by $\epsilon_i\mapsto w_i$, and
$\nu_1$ is given by the $n\times m$ matrix 
$$
\nu_1:=
\begin{pmatrix}
b_{11} & \cdots & b_{1m} \\
b_{21} & \cdots & b_{2m} \\
\vdots & \ddots & \vdots \\
b_{n1} & \cdots & b_{nm}
\end{pmatrix}
=
\begin{pmatrix}
b_{11} & \cdots & b_{1m} \\
& \nu &
\end{pmatrix},
$$
where $b_{1j}w_1 + b_{2j}w_2+\cdots + b_{nj}w_{n}=0$ in $M$, 
for $1\leq j\leq m$. (Such $b_{1j}$ exist because 
$b_{2j}\overline{w_2}+\cdots + b_{nj}\overline{w_{n}}=0$ in 
$Q=M/Rw_1$.) 

From the construction, it is clear that $(\ref{Mres})$ commutes and that
$\mu_1\circ\nu_1$ is the zero map. The choice of the
$w_i$ implies that $(\ref{Mres})$ is exact at $M$. To see that $\ker \mu_1
\subseteq \im\ \nu_1$, suppose that $r_1w_1 +\cdots r_nw_n =0$ in $M$.
Then $r_2\overline{w_2}+\cdots +r_{n}\overline{w_{n}} = 0$ in $Q$, and so
there exist $s_1,\ldots, s_m$ in $R$ such that 
$$
\nu[(s_1,\ldots,s_m)^{\tr}]
= (r_2,\ldots,r_{n})^{\tr},
$$
where $(-)^{\tr}$ denotes the transpose of a matrix.
Then
$$
\nu_1[(s_1,\ldots,s_m)^{\tr}]
= (r,r_2,\ldots,r_{n})^{\tr}\in R^n.
$$
Since $\mu_1\circ\nu_1 = 0$, we see that
$rw_1+r_2w_2+\cdots+r_{n}w_{n} = 0$ in $M$. Therefore 
$rw_1 = r_1w_1$, and so 
$$
\alpha(r-r_1) = (r-r_1)\alpha(1) = (r-r_1)w_1=0.
$$
Since $\alpha$ is
injective, $r=r_1$ so that the vector $(r_1,\ldots,r_n)^{\tr}$
is in $\im\ \nu_1$. 

We can now conclude that the top row of $(\ref{Mres})$ is a finite 
free presentation of $M$.
(Since we do not know \textit{a priori} whether $w_1,\ldots,w_n$ form a minimal
basis for $M$, we cannot say whether
our presentation of $M$ is minimal.) 

We also obtain a commutative diagram with exact rows:
$$
\xymatrix{ 0\ar[r] & R\ar[r]^\alpha & M\ar[r] & Q\ar[r] & 0 \\
F\ar[r]\ar[u] & R^m\ar[r]^{\nu}\ar[u]_\phi & 
R^{n-1}\ar[r]^{\mu}\ar[u]_\psi
  & Q\ar[r]\ar[u]_{\id} & 0 } \leqno\deqno\label{lift} 
$$
where $F$ is free, $\psi(f_2) = w_2,\ldots, \psi(f_n)=w_n$, 
and $\phi$ is given by the $1\times m$ matrix
$(-b_{11}, \ldots, -b_{1m})$. Because $\alpha$ is injective and 
$b_{1j}w_1+\cdots+b_{nj}w_n = 0$
in $M$ for $1\leq j\leq m$, it is clear that (\ref{lift}) commutes
as claimed. We can then take the dual into $R$ of (\ref{lift}):
$$
\xymatrix{ 0\ar[d] & R\ar[l]\ar[d]^{\phi^{\tr}} & \Hom_R(M,R)\ar[l]\ar[d]
  & \Hom_R(Q,R)\ar[l]\ar[d]^{\id} & 0\ar[l] \\
F & R^m\ar[l] & R^{n-1}\ar[l]_{\nu^{\tr}}
  & \Hom_R(Q,R)\ar[l] & 0\ar[l] } \leqno\deqno\label{liftdual} 
$$
Let $Z$ be the kernel of
$R^m\isom\Hom_R(R^m,R)\ra \Hom_R(F,R)\isom F$,
and let $B$ be the image of $\nu^{\tr}$. Then an element of
$\Ext_R^1(Q,R)$ is an element of $Z/B$. As described in 
\cite[Discussion 5.5]{HH94},
the element of $\Ext_R^1(Q,R)$ corresponding to the short exact sequence 
(\ref{SES})
is represented by the map $\phi:R^m\ra R$ in (\ref{lift}). Equivalently, it
is represented by the image of $\phi^{\tr}$ in (\ref{liftdual}). 
\end{disc}

We can now state an equivalent condition
for a finitely generated $R$-module $M$ to be $\natural$-phantom.

\begin{lemma} \label{phcomp}
Let $(R,\mathfrak{m})$ be a complete local domain
possessing a closure operation satisfying Axioms~(1)--(5). 
Let $M$ be a finitely 
generated module, and let $\alpha:R\ra M$ be an injective
map. Using the notation of Discussion \ref{homconstr}, $\alpha$ is a 
$\natural$-phantom
extension of $R$ if and only if the vector $(b_{11},\ldots, b_{1m})^{\tr}$
is in $B_{R^m}^\natural$, where $B$ is the $R$-span in $R^m$ of the
vectors $(b_{i1}, \ldots, b_{im})^{\tr}$ for $2\leq i\leq n$, 
the $b_{ij}$ are in $\mathfrak{m}$ for $2\leq i\leq n$ and all $j$, and
$(-)^{\tr}$ denotes transpose. 
\end{lemma}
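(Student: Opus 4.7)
The plan is a book-keeping translation of the definition of $\natural$-phantom into the explicit linear-algebraic data supplied by Discussion~\ref{homconstr}. First, I would extend the finite free presentation
$R^m \overset{\nu}{\ra} R^{n-1} \overset{\mu}{\ra} Q \ra 0$
of Discussion~\ref{homconstr} to a free resolution $P_\bullet$ of $Q$ with $P_0 = R^{n-1}$ and $P_1 = R^m$. The independence of the $\natural$-phantom condition from the choice of projective resolution, proved in the discussion immediately preceding Discussion~\ref{homconstr}, allows one to test the condition on this specific $P_\bullet$.

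Next I would identify the two ingredients of the condition. Applying $(-)^{\vee}=\Hom_R(-,R)$ to $\nu$ yields $\nu^{\tr}: R^{n-1}\ra R^m$, whose image in $P_1^\vee = R^m$ is exactly $B$, since the columns of $\nu^{\tr}$ are the rows of $\nu$, namely the vectors $(b_{i1},\ldots,b_{im})^{\tr}$ for $2\leq i\leq n$. Hence $\im(P_0^\vee \ra P_1^\vee) = B \subseteq R^m$.

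For the cocycle representing $\epsilon$, diagram~(\ref{lift}) of Discussion~\ref{homconstr} displays a lift of the class $\epsilon\in\Ext_R^1(Q,R)$ to the map $\phi:R^m \ra R$ given by the row vector $(-b_{11},\ldots,-b_{1m})$; dualising as in~(\ref{liftdual}) turns this into the column vector $\phi^{\tr} = (-b_{11},\ldots,-b_{1m})^{\tr}\in R^m = P_1^\vee$. A brief verification shows $\phi^{\tr}$ is indeed a cocycle: extending $P_\bullet$ by $\partial_2:P_2\ra P_1$, the commutativity of the leftmost square in~(\ref{lift}) gives $\alpha\circ\phi\circ\partial_2 = \psi\circ\nu\circ\partial_2 = 0$, so $\phi\circ\partial_2 = 0$ by injectivity of $\alpha$. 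By construction, the class of $\phi^{\tr}$ modulo $B$ represents $\epsilon$.

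Combining these identifications, $\alpha$ is $\natural$-phantom if and only if $\phi^{\tr}\in B^{\natural}_{R^m}$, and since $B^{\natural}_{R^m}$ is a submodule of $R^m$ by Axiom~(1), this is equivalent to $(b_{11},\ldots,b_{1m})^{\tr}\in B^{\natural}_{R^m}$, as claimed. There is no real conceptual obstacle; the only hazard is the mundane one of keeping signs and indices straight while passing between the presentation of $M$, the presentation of $Q$, and their $R$-duals. The lemma really is just the unwinding of the definition of $\natural$-phantom against the explicit data assembled in Discussion~\ref{homconstr}.
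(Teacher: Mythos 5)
Your proposal is correct and follows essentially the same route as the paper: identify $\im(P_0^\vee\ra P_1^\vee)$ with the span $B$ of the rows of $\nu$, identify the cocycle representing $\epsilon$ with $\phi^{\tr}=(-b_{11},\ldots,-b_{1m})^{\tr}$ as set up in Discussion~\ref{homconstr}, and absorb the sign using the fact that the closure is a submodule. The paper's proof is just a terser version of this same unwinding.
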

\begin{proof} By our definition and the constructions above, 
$\alpha$ is $\natural$-phantom if and only if the cocycle representing
the corresponding element $\epsilon$ 
in $\Ext_R^1(Q,R)$ is in $(\im\ \nu^{\tr})_{R^m}^\natural$. 
Recall that $\epsilon$ is represented by the image of $\phi^{\tr}$, 
which
is $(-b_{11},\ldots, -b_{1m})^{\tr}$. Moreover, the image of $\nu^{\tr}$ is
the $R$-span of the row vectors of $\nu$, which is the $R$-span
of $(b_{i1}, \ldots, b_{im})^{\tr}$ for $2\leq i\leq n$.
The claim concerning the $b_{ij}$ also follows from the preceding discussion. 
\end{proof}

We can now prove the following fact, an analogue of
\cite[Proposition 5.14]{HH94}, by including Axiom~(6).

\begin{lemma} \label{notbad}
Let $(R,\mathfrak{m})$ be a complete local domain
possessing a closure operation satisfying Axioms~(1)--(6).
Let $M$ be a finitely generated 
$R$-module. If $\alpha:R\ra M$ is a $\natural$-phantom extension, 
then $\alpha(1)\not\in \mathfrak{m} M$. 
\end{lemma}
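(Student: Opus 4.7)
The plan is to argue by contradiction using the explicit description of $\natural$-phantom extensions from Lemma~\ref{phcomp}. Suppose $\alpha(1)\in\mathfrak{m}M$. In the notation of Discussion~\ref{homconstr}, $w_1=\alpha(1)$ and $w_2,\dots,w_n$ are chosen so that $\overline{w_2},\dots,\overline{w_n}$ form a minimal generating set of $Q$; these together generate $M$, so the assumption $w_1\in\mathfrak{m}M$ forces an expression
$$
w_1 = a_1 w_1 + a_2 w_2 + \cdots + a_n w_n
$$
with all $a_i\in\mathfrak{m}$. Rearranging produces the relation $(1-a_1)w_1 - a_2 w_2 - \cdots - a_n w_n = 0$ in $M$, so the column vector $v:=(1-a_1,-a_2,\dots,-a_n)^{\tr}$ lies in $\ker\mu_1 = \im\,\nu_1$. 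Hence there exist $s_1,\dots,s_m\in R$ with $\nu_1\cdot(s_1,\dots,s_m)^{\tr}=v$, i.e.\ $\sum_{j}b_{1j}s_j = 1-a_1$ and $\sum_{j}b_{ij}s_j = -a_i$ for $2\leq i\leq n$.

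Next I would produce the contradiction by testing $B$ against a single carefully chosen linear functional. Define $\psi:R^m\to R$ by $\psi(r_1,\dots,r_m)=\sum_{j=1}^m s_j r_j$. Then $\psi$ applied to the $i$-th row of $\nu_1$, viewed as a column vector in $R^m$, equals the $i$-th coordinate of $v$: so $\psi\bigl((b_{11},\dots,b_{1m})^{\tr}\bigr)=1-a_1$, which is a \emph{unit} because $R$ is local and $a_1\in\mathfrak{m}$, while $\psi\bigl((b_{i1},\dots,b_{im})^{\tr}\bigr)=-a_i\in\mathfrak{m}$ for $i\geq 2$. Since $B$ is generated by these latter rows, $\psi(B)\subseteq\mathfrak{m}$.

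Now I invoke the hypothesis: by Lemma~\ref{phcomp}, $(b_{11},\dots,b_{1m})^{\tr}\in B_{R^m}^{\natural}$. Applying Axiom~(4) to $\psi$ yields $1-a_1=\psi\bigl((b_{11},\dots,b_{1m})^{\tr}\bigr)\in\psi(B)_R^{\natural}$. Since $\psi(B)\subseteq\mathfrak{m}$, Axiom~(3) gives $\psi(B)_R^{\natural}\subseteq\mathfrak{m}_R^{\natural}$, and Axiom~(6) then forces $\psi(B)_R^{\natural}\subseteq\mathfrak{m}$. Thus $1-a_1\in\mathfrak{m}$, contradicting the fact that it is a unit.

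The substantive step is the choice of the functional $\psi$, which encodes the putative dependence relation on the $w_i$ and simultaneously witnesses that the generator of interest maps to a unit while all of $B$ maps into $\mathfrak{m}$. Once $\psi$ is in hand, the remaining work is a direct invocation of Axioms~(3), (4), and (6) together with Lemma~\ref{phcomp}; the reliance on Axiom~(6) (that $\mathfrak{m}$ is $\natural$-closed) is exactly where the assumption fails without it, paralleling the role Axiom~(6) plays in the tight closure analogue \cite[Proposition~5.14]{HH94}.
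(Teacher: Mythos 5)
Your proof is correct and takes essentially the same route as the paper's: both argue by contradiction from Lemma~\ref{phcomp} and reach the contradiction by pushing the first row of $\nu_1$ into $R$ via a linear functional and Axiom~(4), then applying Axioms~(3) and (6) to conclude a unit lies in $\mathfrak{m}$. The only difference is cosmetic -- the paper observes that the entries $b_{1j}$ must generate the unit ideal and projects onto a coordinate with a unit entry $b_{1j_0}$ (using $B\subseteq\mathfrak{m}R^m$), whereas you test against the functional $\psi=\sum_j s_j\pi_j$ built from the coefficients expressing the relation vector in $\im\,\nu_1$; the mechanism is identical.
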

\begin{proof} Suppose that $\alpha(1)\in\mathfrak{m} M$.
Using the notation from Discussion~\ref{homconstr}, we have $\alpha(1)=w_1$. 
So, $\alpha(1)\in \mathfrak{m} M$ if and only if
$w_1=r_2w_2+\cdots+r_{n}w_{n}$ such that the $r_i$ are in $\mathfrak{m}$. 
This occurs if and only if the vector $(1,-r_2,\ldots,-r_{n})^{\tr}$ is in 
$\im\ \nu_1$. In order for a vector
with first component a unit to be in 
$\im\ \nu_1$, the first row of $\nu_1$
must generate the unit ideal, \emph{i.e.}, 
$(b_{11},\ldots,b_{1m})R = R$. Therefore,
there exists $j_0$ such that $b_{1j_0}\in R\setminus \mathfrak{m}$. 
By Lemma \ref{phcomp},
since $\alpha$ is $\natural$-phantom, the vector 
$(b_{11},\ldots, b_{1m})^{\tr}$ is in
$B_{R^m}^\natural$, where $B$ is the $R$-span in $R^m$ of the
vectors $(b_{i1}, \ldots, b_{im})^{\tr}$ for $2\leq i\leq n$. 
Since $b_{ij}\in \mathfrak{m}$ for $2\leq i\leq n$ and $1\leq j\leq m$,
Axiom~(3) implies that $(b_{11},\ldots, b_{1m})^{\tr}$ is 
in $(\mathfrak{m} R^m)_{R^m}^\natural$. 
Using Axiom~(4), for the projection $R^m\ra R$ mapping onto the 
$j_0^{th}$-coordinate, we see that $b_{1j_0}\in (\mathfrak{m} R)_R^\natural$, 
but $(\mathfrak{m} R)_R^\natural =  \mathfrak{m}$ by Axiom~(6). 
This implies that $b_{1j_0}$
cannot be a unit, and so $\alpha(1)\not\in  \mathfrak{m} M$. 
\end{proof}

\section{The Axioms Induce Balanced Big Cohen-Macaulay Modules}

In order to use $\natural$-phantom extensions and our axioms to produce
balanced big Cohen-Macaulay modules, we will use \textit{module modifications}
as found in \cite{Ho75} and \cite{HH94}. We will leave it to the 
reader to review the details
of the construction in \cite[Discussion 5.15]{HH94}, but we will 
outline the idea and its
connection to $\natural$-phantom extensions.

\begin{disc}\label{bigCMconstr}
We can produce a balanced big Cohen-Macaulay module by starting with $R$ and
successively modifying it to trivialize any relations on systems of 
parameters. Starting
with $M_0 = R$ and $w_0 = 1$, construct a sequence of modules $M_t$ containing
elements $w_t$ and homomorphisms $M_t\to M_{t+1}$ such that 
$w_t\mapsto w_{t+1}$.
Given a module $M_t$, we construct $M_{t+1}$ by selecting a relation 
$x_{k+1}u = x_1u_1 + \cdots + x_ku_k$ in $M_t$, where $x_1, \ldots,x_{k+1}$ 
forms
a partial system of parameters in $R$. Then define 
$$
M_{t+1} := \frac{M_t\oplus Rf_1\oplus \cdots \oplus Rf_k}
{R(-u\oplus x_1f_1\oplus\cdots\oplus
x_kf_k)}
$$
in order to trivialize the relation in $M_{t+1}$. The construction yields 
a natural map
$M_t\to M_{t+1}$, where we define $w_{t+1}$ to be the image of $w_t$. Since
we started the chain with $R$ and $1$, we also have maps $R\to M_t$ with
$1\mapsto w_t$ for all $t$.

Continuing in this
manner, we will define the module $B$ to be the direct limit of all such 
$M_t$. 
The module $B$ has the property that all relations on parameters in $B$ 
are trivial,
but it is \textit{a priori} unclear whether or not we have trivialized 
too much and 
caused $\mathfrak{m}B = B$. To avoid this possibility, we will show that
the image of $1$ from $R$ in $B$ is not in $\mathfrak{m}B$ by showing that
$w_t\not\in \mathfrak{m}M_t$ for all $t$. 

In order to accomplish this goal, we will demonstrate that each
module modification $R\to M_t$ is a $\natural$-phantom extension.
By Lemma \ref{notbad}, we then see that $w_t\not\in \mathfrak{m}M_t$ 
for all $t$,
proving that $B$ is a balanced big Cohen-Macaulay module.
\end{disc}

\begin{disc}\label{natphhom}
Let $R$ be a complete local domain. Let 
$\alpha: R \to M$
be an injective map to a finitely generated $R$-module $M$ with a relation
$$
x_1u_1 + \cdots + x_{k+1}u_{k+1}, \leqno\deqno\label{soprel}
$$ 
where $x_1,\ldots,x_{k+1}$ is part of a system of parameters for $R$.
(We will show in Lemma~\ref{injmod} that $R$ injects into all module
modifications and use the identity map on $R$ as a base case.)

Without
loss of generality, denote the (not necessarily minimal) generators
of $M$ by
$$
m_1 =: \alpha(1), m_2, \ldots, m_{n-k-1}, m_{n-k}:=u_1,\ldots,m_n:=u_{k+1}.
$$
We then have a short exact sequence as in (\ref{SES}):
$$
0\to R\overset\alpha\to M\to Q\to 0,
$$
where $Q := M/\alpha(R) = M/Rm_1$. Using the matrices worked out
for the diagram (\ref{Mres}), we obtain
$$
\xymatrix{
& & 0 & 0 & \\
0 \ar[r] & R\ar[r]^{\alpha} & M\ar[r]\ar[u] & Q\ar[r]\ar[u] & 0 \\
& & R^n\ar[u]^{\mu_1}\ar[r] & R^{n-1}\ar[u]_\mu & \\
& & R^m\ar[u]^{\nu_1}\ar[r] & R^m\ar[u]_\nu
}
\leqno\deqno\label{bigpres}
$$
where $\mu$ and $\mu_1$ are as given in Discussion \ref{homconstr}, 
$$
\nu :=
\begin{pmatrix}
b_{21} & \cdots & b_{2,m-1} & 0 \\
\vdots & \ddots & \vdots & \vdots \\
b_{n-k-1,1} & \cdots & b_{n-k-1,m-1} & 0 \\
b_{n-k,1} & \cdots & b_{n-k,m-1} & x_1 \\
\vdots & \ddots & \vdots & \vdots \\
b_{n1} & \cdots & b_{n,m-1} & x_{k+1}
\end{pmatrix}
$$
(as we can include the relation (\ref{soprel}) without loss of
generality), and 
$$
\nu_1 := 
\begin{pmatrix}
b_{11} & \cdots & b_{1,m-1} & 0 \\
& \hspace{7mm}\nu
\end{pmatrix}.
$$
Then (\ref{bigpres}) gives a commutative diagram with exact rows
and columns and free presentations of $M$ and $Q$. 
\end{disc}

\begin{note} \label{natphhomnote}
Using the construction in Discussion \ref{natphhom}, we define the following:
$$
\begin{array}{l}
\bx := (b_{11}, \ldots,  b_{1,m-1}, 0)^{\tr},
\mbox{ the transpose of the first row of } \nu_1. \\
\by := ( b_{n1}, \ldots, b_{n,m-1}, x_{k+1})^{\tr},
\mbox{ the transpose of the last row of } \nu. \\
H := \mbox{$R$-module generated by all the rows of $\nu$ 
except the last row}. \\
I := (x_1,\ldots, x_k). 
\end{array}
$$
\end{note}

Adapting the characterization of $\natural$-phantom
extensions using free presentations from Lemma \ref{phcomp}, we have

\begin{lemma}\label{alphaph}
Let $R$ be a complete local domain possessing a closure operation
satisfying Axioms~(1)--(5). Use the notation from Discussion~\ref{natphhom}
and Notation~\ref{natphhomnote}.
The map $\alpha$ is $\natural$-phantom if and only if
$$
\bx \in (R\by+ H)^\natural_{R^m}.
$$
\end{lemma}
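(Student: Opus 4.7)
The plan is to observe that this lemma is essentially an unpacking of Lemma~\ref{phcomp} under the notation set up in Discussion~\ref{natphhom} and Notation~\ref{natphhomnote}. The diagram $(\ref{bigpres})$ is precisely the diagram $(\ref{Mres})$ of Discussion~\ref{homconstr} specialized to the generating set $m_1 = \alpha(1), m_2,\ldots,m_n$ of $M$ and tailored so that the last column of $\nu$ records the relation $(\ref{soprel})$. So Lemma~\ref{phcomp} applies verbatim, and the work reduces to matching the combinatorial data.

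First, I would verify that the vector $(b_{11},\ldots,b_{1m})^{\tr}$ produced by Lemma~\ref{phcomp} coincides with $\bx$. The entry $b_{1m}$ is determined (only up to relations that already hold) by the requirement that $b_{1m} m_1 + b_{2m} m_2 + \cdots + b_{nm} m_n = 0$ in $M$. With $b_{2m} = \cdots = b_{n-k-1,m} = 0$ and $b_{n-k,m} = x_1,\ldots,b_{nm} = x_{k+1}$, this becomes $b_{1m}\alpha(1) + x_1 u_1 + \cdots + x_{k+1} u_{k+1} = 0$. By hypothesis the sum on the right is already zero (that is $(\ref{soprel})$), so we may take $b_{1m} = 0$; hence $(b_{11},\ldots,b_{1m})^{\tr} = (b_{11},\ldots,b_{1,m-1},0)^{\tr} = \bx$.

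Next, I would identify the submodule $B$ appearing in Lemma~\ref{phcomp} with $R\by + H$. By definition $B$ is the $R$-span of the transposes of the rows $2,\ldots,n$ of $\nu_1$, which are exactly the rows of $\nu$. Under Notation~\ref{natphhomnote}, the transpose of the last row is $\by$, while the transposes of rows $2,\ldots,n-1$ span $H$. Hence $B = R\by + H$.

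Combining the two identifications, Lemma~\ref{phcomp} states that $\alpha$ is $\natural$-phantom iff $\bx \in (R\by + H)^\natural_{R^m}$, which is the desired equivalence. There is no substantive obstacle here; the only thing to be careful about is the justification that $b_{1m}$ may be taken to be $0$, which is what allows the last coordinate of $\bx$ to vanish and lets $\by$ appear on the right-hand side with its last coordinate equal to $x_{k+1}$ rather than something modified by a term involving $\alpha(1)$.
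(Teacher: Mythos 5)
Your proposal is correct and matches the paper's treatment: the paper states Lemma~\ref{alphaph} as an immediate adaptation of Lemma~\ref{phcomp}, since the first row of $\nu_1$ in Discussion~\ref{natphhom} is exactly $\bx$ (the last entry being $0$ because the relation $(\ref{soprel})$ involves no multiple of $\alpha(1)$, forced by injectivity of $\alpha$) and the span of the remaining rows is exactly $R\by + H$. Your bookkeeping of these identifications is precisely the intended argument.
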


\begin{disc}\label{modmod}
We now investigate module modifications. Using the notation
developed in Discussion \ref{natphhom}, use free generators
$f_1,\ldots f_k$ to define the modification
$$
M' := \frac{M\oplus Rf_1 \oplus\cdots\oplus Rf_k}
{R(m_n \oplus x_1f_1\oplus\cdots\oplus x_kf_k)}
$$
of $M$ with respect to the relation (\ref{soprel}) on
the parameters $x_1,\ldots,x_{k+1}$. 
Define 
$$
\alpha': R\to M'
$$
as the composition $\alpha' = \beta \circ \alpha$, where
$\beta(u)$ is the image of $u\oplus 0\cdots\oplus 0$ in $M'$ for all $u\in M$.

As stated in Discussion \ref{bigCMconstr}, in order to show that
our sequence of modifications leads to a balanced big Cohen-Macaulay 
module $B$, it suffices to show that $\alpha'$ is $\natural$-phantom
when $\alpha$ is $\natural$-phantom. 
\end{disc}

\begin{lemma} \label{injmod} Using the constructions given in Discussions
\ref{natphhom} and \ref{modmod}, the map $\alpha'$ is injective.
\end{lemma}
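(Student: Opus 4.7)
The plan is to work directly from the definitions. To say that $\alpha'(r) = 0$ means that the element $(rm_1, 0, \ldots, 0)$ of $M \oplus Rf_1 \oplus \cdots \oplus Rf_k$ lies in the cyclic submodule generated by $m_n \oplus x_1 f_1 \oplus \cdots \oplus x_k f_k$. So I would first write
$$
(rm_1, 0, \ldots, 0) = s\,(m_n, x_1 f_1, \ldots, x_k f_k)
$$
for some $s \in R$, and then read off the component equations.

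Next I would look at the free-summand components. The $Rf_i$ components yield $sx_i = 0$ in $R$ for $i = 1, \ldots, k$. Since $R$ is a complete local domain and $x_1$ is part of a system of parameters (hence nonzero), the equation $sx_1 = 0$ forces $s = 0$. Then the $M$-component gives $rm_1 = 0$, i.e., $\alpha(r) = r\alpha(1) = 0$. Because $\alpha$ itself is assumed injective, this forces $r = 0$.

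There is no serious obstacle here: everything follows from the fact that $R$ is a domain, that $x_1 \neq 0$, and that $\alpha$ was injective to begin with. The only thing to be careful about is making sure the direct sum decomposition is being used correctly before quotienting, so that the relation truly forces $sx_i = 0$ (rather than some vanishing modulo the subgroup being quotiented out), which is immediate since we have already factored out the single generator in forming $M'$. This lemma then serves as the base case for iterating the modification construction described in Discussion \ref{modmod}.
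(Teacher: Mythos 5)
Your proof is correct and is essentially the paper's argument: both reduce to noting that if the element lands in the cyclic submodule generated by $m_n \oplus x_1 f_1 \oplus \cdots \oplus x_k f_k$, then the $f_i$-components force the coefficient to satisfy $sx_1 = 0$, hence $s=0$ since $R$ is a domain, after which injectivity of $\alpha$ finishes. The only cosmetic difference is that the paper first proves the map $\beta: M \to M'$ is injective and then composes with the injective $\alpha$, whereas you check $\alpha'$ directly.
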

\begin{proof}
Since $\alpha' = \beta\circ\alpha$ and $\alpha$ is injective, it suffices
to show
that $\beta$ is also injective. If $\beta(u) = 0$ in $M'$, then 
$$
u\oplus 0\oplus\cdots\oplus 0 = r(m_n\oplus x_1f_1\oplus\cdots\oplus x_kf_k)
$$
so that $rx_1=0$. As $R$ is a domain, $r=0$ and thus $u=r m_n = 0$.
\end{proof}

One can construct free presentations for $M'$ and 
$Q':=M'/\alpha'(1)$ similarly 
to how we constructed free presentations for $M$ and $Q$ in 
(\ref{bigpres}) because
we added $k$ new generators and one new relation.

\begin{lemma} The following diagram is commutative and 
gives free presentations for $M'$ and $Q'$:
$$
\xymatrix{
& & 0 & 0 & \\
0 \ar[r] & R\ar[r]^{\alpha'} & M'\ar[r]\ar[u] & Q'\ar[r]\ar[u] & 0 \\
& & R^{n+k}\ar[u]^{\mu'_1}\ar[r] & R^{n-1+k}\ar[u]_{\mu'} & \\
& & R^{m+1}\ar[u]^{\nu'_1}\ar[r] & R^{m+1}\ar[u]_{\nu'}
}
\leqno\deqno\label{bigpres'}
$$
where the matrices
$\mu'$ and $\mu'_1$ are extensions of $\mu$ and $\mu_1$ that 
account for the $k$
new generators $w_1,\ldots,w_k$, $\nu'$ is the $(n-1+k)\times(m+1)$ matrix
$$
\nu'  :=  
\begin{pmatrix}
 &  \vline & 0 \\
 & \vline & \vdots \\
 \nu &  \vline & 0 \\
 &  \vline & 1 \\
 \hline
 & \vline & x_1 \\
\mathbf{0}  & \vline & \vdots \\
 & \vline & x_k \\
 \end{pmatrix},
$$
and $\nu'_1$ is the $(n+k)\times(m+1)$ matrix
$$
\nu'_1 :=
\begin{pmatrix}
b_{11} & \cdots & b_{1,m-1} & 0 & 0\\
& \hspace{7mm}\nu'
\end{pmatrix}.
$$
\end{lemma}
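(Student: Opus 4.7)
The plan is to write $\mu'_1$, $\mu'$, $\nu'_1$, $\nu'$ out explicitly on basis vectors, verify commutativity of the diagram by inspection, and then check exactness of the two middle columns by using the description of $M'$ as a cokernel together with the exactness of~(\ref{bigpres}).

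Explicitly, I would define $\mu'_1 \colon R^{n+k} \to M'$ by sending the standard basis to $m_1, \ldots, m_n, f_1, \ldots, f_k$, and $\mu' \colon R^{n-1+k} \to Q'$ by sending the standard basis to $\overline{m_2}, \ldots, \overline{m_n}, \overline{f_1}, \ldots, \overline{f_k}$, with overlines denoting images in $Q' = M'/Rm_1$. The unmarked horizontal arrows $R^{n+k} \to R^{n-1+k}$ and $R^{m+1} \to R^{m+1}$ are, respectively, the projection killing the first coordinate (paralleling $\pi$ in~(\ref{Mres})) and the identity. With these choices, every square in~(\ref{bigpres'}) commutes by direct inspection on basis vectors.

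For exactness of the $M'$-column, surjectivity of $\mu'_1$ is immediate from the choice of generators, and $\im \nu'_1 \subseteq \ker \mu'_1$ is verified column by column: the first $m$ columns of $\nu'_1$ reproduce the relations from $\nu_1$ extended by zeros across the $f$-rows (which remain valid in $M'$), while the last column encodes the new relation $m_n + x_1 f_1 + \cdots + x_k f_k = 0$ imposed in the definition of $M'$. For the reverse inclusion, take $(r_1, \ldots, r_n, t_1, \ldots, t_k) \in \ker \mu'_1$. The cokernel description of $M'$ forces the existence of $r \in R$ with $t_i = r x_i$ for all $i$ and $r_1 m_1 + \cdots + r_{n-1} m_{n-1} + (r_n - r) m_n = 0$ in $M$; applying exactness at $R^n$ in~(\ref{bigpres}) yields $c_1, \ldots, c_m$ with $\nu_1 (c_1, \ldots, c_m)^{\tr} = (r_1, \ldots, r_{n-1}, r_n - r)^{\tr}$, and a short computation using that the final column of $\nu'_1$ is $(0, \ldots, 0, 1, x_1, \ldots, x_k)^{\tr}$ (with the $1$ in row $n$) confirms that $\nu'_1 (c_1, \ldots, c_m, r)^{\tr} = (r_1, \ldots, r_n, t_1, \ldots, t_k)^{\tr}$.

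Exactness of the $Q'$-column proceeds in essentially the same way, now working modulo $Rm_1$: one identifies $Q' \cong (Q \oplus Rf_1 \oplus \cdots \oplus Rf_k)/R(\overline{m_n} + x_1 f_1 + \cdots + x_k f_k)$ and repeats the argument using exactness of~(\ref{Mres}) at $R^{n-1}$. The only real obstacle here is index bookkeeping: one must set up notation so that the extra $(m+1)$-st column of $\nu'$ (respectively $\nu'_1$) visibly lines up its $1$-entry with the $\overline{m_n}$-coordinate (respectively the $m_n$-coordinate) of the target, after which every verification reduces to matching entries of column vectors.
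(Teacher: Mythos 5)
Your proposal is correct and takes essentially the same route as the paper: the paper also verifies the diagram by direct inspection and proves exactness at $R^{n+k}$ by exactly your computation (writing an element of $\ker\mu'_1$ as $r$ times the killed relation $m_n + x_1f_1+\cdots+x_kf_k$ in $M\oplus R^k$, deducing $t_i = rx_i$ and a relation in $M$, and pulling back via exactness of the presentation of $M$). The only difference is that you also spell out the $Q'$-column and the commutativity checks, which the paper dismisses as straightforward.
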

\begin{proof}
All of the claims are straightforward to verify. 
We will describe the exactness 
at $R^{n+k}$ though. Indeed, $\mu'_1\circ \nu'_1 = 0$ as $\mu_1\circ\nu_1 =0$
and $m_n + x_1w_1+\cdots+x_kw_k=0$ in $M'$. To see why the kernel of $\mu'_1$
is contained in the image of $\nu'_1$, suppose that 
$\mu'_1(r_1,\ldots,r_{n+k})^{\tr} = 0$.
Then 
$$
r_1m_1 + \cdots r_nm_n + r_{n+1}w_1 +\cdots+ r_{n+k}w_k = 
r(m_n + x_1w_1+\cdots + x_kw_k)
$$
in $M\oplus R^k$. Thus, 
$$
r_1m_1 + \cdots + r_{n-1}m_{n-1} + (r_n - r)m_n = 0
$$
in $M$ and $r_{n+i} = rx_i$ for all $1\leq i\leq k$. Thus, 
$(r_1,\ldots,r_{n+k})^{\tr}$
is an $R$-linear combination of the columns of $\nu'_1$.
\end{proof}

We now characterize when $\alpha'$ is $\natural$-phantom. The 
proof follows directly from
Lemma~\ref{phcomp} and our computations of $\nu'_1$ and $\nu'$.
 
\begin{lemma}\label{aplha'ph}
Let $R$ be a complete local domain possessing a closure
operation satisfying Axioms~(1)--(5).
Use Notation \ref{natphhomnote}.
The map $\alpha'$ is $\natural$-phantom if and only if
$$
\bx\oplus 0 \in (R(\by\oplus 1) + (H\oplus 0) + 
I(\mathbf{0}\oplus 1))^\natural_{R^{m+1}}.
$$
\end{lemma}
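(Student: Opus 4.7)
The plan is to apply Lemma~\ref{phcomp} (in the form of Lemma~\ref{alphaph}) directly to the map $\alpha':R\to M'$, using the finite free presentations of $M'$ and $Q'$ furnished by the preceding lemma. This reduces the statement to a bookkeeping exercise: read the top row of $\nu'_1$ to identify the left-hand side, and decompose the $R$-span of the rows of $\nu'$ into the three summands on the right-hand side.

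First I would invoke the characterization of $\natural$-phantom extensions from Lemma~\ref{phcomp} applied to $\alpha'$ with the presentation from the previous lemma: the map $\alpha'$ is $\natural$-phantom if and only if the transpose of the first row of $\nu'_1$ lies in the $\natural$-closure, inside $R^{m+1}$, of the $R$-span of the rows of $\nu'$. Reading the first row of $\nu'_1$ directly off the displayed matrix, namely $(b_{11},\ldots,b_{1,m-1},0,0)$, its transpose is exactly $\bx\oplus 0$, which matches the left-hand side of the claimed condition.

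Next I would decompose the rows of $\nu'$ using its block structure. They fall into three natural groups: the first $n-2$ rows are the rows of $\nu$ other than its last row, each with a trailing $0$ appended; the $(n-1)$-st row is the last row of $\nu$ with a trailing $1$ appended; and the final $k$ rows are zero in the first $m$ positions and carry $x_1,\ldots,x_k$ respectively in the last position. By the definitions collected in Notation~\ref{natphhomnote}, these three groups span, in order, $H\oplus 0$, $R(\by\oplus 1)$, and $I(\mathbf{0}\oplus 1)$. Summing them gives exactly the submodule appearing on the right-hand side of the displayed equivalence, so the lemma follows.

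I do not anticipate any substantive obstacle in the argument: all the conceptual work is already packaged in Lemma~\ref{phcomp} and in the explicit block form of $\nu'$ supplied by the previous lemma. The only place where care is genuinely required is in correctly matching each block of rows of $\nu'$ with the corresponding summand on the right-hand side, which is why I would walk through the three row-groups explicitly rather than collapsing the computation into a single step.
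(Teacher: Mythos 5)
Your proposal is correct and follows exactly the route the paper intends: the paper gives no separate argument for this lemma beyond remarking that it ``follows directly from Lemma~\ref{phcomp} and our computations of $\nu'_1$ and $\nu'$,'' which is precisely your reduction. Your explicit matching of the first row of $\nu'_1$ with $\bx\oplus 0$ and of the three row-blocks of $\nu'$ with $H\oplus 0$, $R(\by\oplus 1)$, and $I(\mathbf{0}\oplus 1)$ supplies the bookkeeping the paper leaves implicit.
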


We now apply Axiom~(6) to connect the conditions characterizing
when $\alpha$ and $\alpha'$ are $\natural$-phantom. 

\begin{lemma} 
Let $R$ be a complete local domain possessing a closure operation
satisfying Axioms~(1)--(6).
Use Notation \ref{natphhomnote}.
Let $\bv$ be in $R^m$. If $\bv\in (I\by + H)^\natural_{R^m}$, then
$\bv\oplus 0\in (R(\by\oplus 1) + (H\oplus 0) + 
I(\mathbf{0}\oplus 1))^\natural_{R^{m+1}}$.
\end{lemma}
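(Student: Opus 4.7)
The plan is to push the hypothesis up along the $R$-linear inclusion $g\colon R^m \hookrightarrow R^{m+1}$ given by $\bv \mapsto \bv \oplus 0$, using only Axioms~(3) and (4). Set
$$
N := R(\by \oplus 1) + (H \oplus 0) + I(\mathbf{0} \oplus 1),
$$
the submodule of $R^{m+1}$ whose closure we want to land inside.

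The key observation is that $g(I\by + H) \subseteq N$. Indeed, $g(H) = H \oplus 0$ is visibly a summand of $N$, and for each $i \in I$ the one-line identity
$$
(i\by,\,0) \;=\; i(\by \oplus 1) - i(\mathbf{0} \oplus 1)
$$
realizes $g(i\by) = i\by \oplus 0$ as the difference of an element of $R(\by \oplus 1)$ and an element of $I(\mathbf{0} \oplus 1)$ (the latter because $i \in I$), both of which lie in $N$. Hence $g(I\by) \subseteq N$ as well, and combining, $g(I\by + H) \subseteq N$.

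With this in hand, Axiom~(4) applied to $g$ gives
$$
g\!\left((I\by + H)^\natural_{R^m}\right) \subseteq g(I\by + H)^\natural_{R^{m+1}},
$$
and Axiom~(3), applied to the containment $g(I\by + H) \subseteq N$ inside $R^{m+1}$, gives
$$
g(I\by + H)^\natural_{R^{m+1}} \subseteq N^\natural_{R^{m+1}}.
$$
Chaining the two inclusions and evaluating at the given $\bv$ produces $\bv \oplus 0 = g(\bv) \in N^\natural_{R^{m+1}}$, which is exactly the lemma's conclusion.

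The only substantive step is the algebraic identity that rewrites the ``mixed'' generator $i\by \oplus 0$ in terms of the three natural generating pieces of $N$; everything else is a routine application of the closure axioms, so I do not anticipate any genuine obstacle.
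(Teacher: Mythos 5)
Your proof is correct, and it takes a somewhat different (and more economical) route than the paper's. The paper transports the hypothesis from $R^m$ to $R^{m+1}$ via the direct-sum compatibility of the closure, Lemma~\ref{proplist}(b), invoking Axiom~(6) to identify $0^\natural_R = 0$ so that $(I(\by\oplus 0) + (H\oplus 0))^\natural_{R^{m+1}} = (I\by+H)^\natural_{R^m}\oplus 0$, and then finishes, exactly as you do, from the containment $I(\by\oplus 0)\subseteq R(\by\oplus 1) + I(\mathbf{0}\oplus 1)$ together with Axiom~(3). You instead push forward along the split inclusion $g\colon R^m\to R^{m+1}$ using Axiom~(4) and then enlarge the target submodule using Axiom~(3); your key identity $i\by\oplus 0 = i(\by\oplus 1) - i(\mathbf{0}\oplus 1)$ is the same algebraic observation underlying the paper's containment. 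What your version buys is that it uses only Axioms~(3) and (4): Axiom~(6) and Lemma~\ref{proplist}(b) never enter, so the lemma in fact holds already under Axioms~(1)--(5); in effect you have inlined the one inclusion of Lemma~\ref{proplist}(b) (namely $\bigoplus_i (N_i)^\natural_{M_i}\subseteq N^\natural_M$, itself proved via Axioms~(3) and (4)) that the argument genuinely needs. One cosmetic quibble: $H\oplus 0$ is one of the three pieces in the sum defining $N$, i.e., a submodule of $N$, not necessarily a direct summand, so ``summand'' is a slight misnomer; this has no effect on the argument.
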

\begin{proof}
By Axiom~(6), $0^\natural_{R} = 0$. By Lemma \ref{proplist}(b),
$$
(I(\by\oplus 0) + (H\oplus 0) )^\natural_{R^{m+1}}
= (I\by+H)^\natural_{R^m} \oplus 0^\natural_R 
= (I\by+H)^\natural_{R^m} \oplus 0.
$$
Since $I(\by\oplus 0)\subseteq R(\by\oplus 1) + I(\mathbf{0}\oplus 1)$,
Axiom~(3) finishes the proof.
\end{proof}

\begin{prop}\label{natphcond}
Let $R$ be as in the previous lemma.
Use Notation \ref{natphhomnote}.
If $\bx \in (I\by + H)^\natural_{R^m}$, then $\alpha'$ is $\natural$-phantom.
\end{prop}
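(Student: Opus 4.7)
The plan is to simply chain together the two preceding lemmas, since the hypothesis of this proposition is exactly what the unnamed lemma just above needs, and its conclusion is precisely what Lemma~\ref{aplha'ph} requires in order to certify that $\alpha'$ is $\natural$-phantom. So no new construction or axiom invocation should be required beyond what has already been assembled.

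More concretely, first I would take the hypothesis $\bx \in (I\by + H)^\natural_{R^m}$ and feed it into the previous lemma with $\bv = \bx$. That lemma (which is the place where Axiom~(6) does the real work, via $0^\natural_R = 0$ and the direct-sum behavior from Lemma~\ref{proplist}(b)) yields
$$
\bx \oplus 0 \in \bigl(R(\by \oplus 1) + (H \oplus 0) + I(\mathbf{0} \oplus 1)\bigr)^\natural_{R^{m+1}}.
$$
Next, I would recognize the right-hand side as exactly the submodule appearing in the characterization of Lemma~\ref{aplha'ph}, so that lemma directly tells us $\alpha'$ is $\natural$-phantom.

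There is essentially no obstacle; the substantive content has been packaged into the preceding two lemmas, which handle (i) translating the $\natural$-phantom condition for $\alpha'$ into a statement about the explicit vectors $\bx, \by, H, I$ attached to the modified presentation (\ref{bigpres'}), and (ii) transporting a closure membership in $R^m$ up to the extended ambient module $R^{m+1}$ at the cost of only one extra coordinate, where Axiom~(6) guarantees that the extra coordinate contributes nothing new. The only thing to be mildly careful about is matching the summands: one uses the containment $I(\by \oplus 0) \subseteq R(\by \oplus 1) + I(\mathbf{0} \oplus 1)$ (already invoked in the previous lemma via Axiom~(3)) to line the expressions up exactly.

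Thus the proof proposal is: apply the preceding lemma to the vector $\bv = \bx$, then invoke Lemma~\ref{aplha'ph} to conclude.
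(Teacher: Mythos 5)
Your proposal is correct and is exactly how the paper intends this proposition to follow (it is stated without separate proof precisely because it is the immediate concatenation of the preceding lemma, applied with $\bv=\bx$, and Lemma~\ref{aplha'ph}). No gaps.
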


Before we apply Axiom~(7) (the generalized colon-capturing property), 
we look again at the closure conditions found in Lemma \ref{alphaph} and
Proposition \ref{natphcond}.

\begin{lemma}
Let $R$ be a complete local domain possessing a closure operation
satisfying Axioms~(1)--(5).
Use Notation \ref{natphhomnote}, let $Q:=R^m/H$, let $v:=\by + H$, and let
$\ba\in R^m$. 
\begin{alist}
\item $\ba \in (R\by+H)^\natural_{R^m}$ if and only if 
   $\ba+H\in(Rv)^\natural_{Q}$.
\item $\ba \in (I\by +H)^\natural_{R^m}$ if and only if 
   $\ba+H\in(Iv)^\natural_{Q}$.
\end{alist}
\end{lemma}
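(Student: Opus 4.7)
The plan is to observe that both parts are direct corollaries of Lemma \ref{proplist}(a), which is the statement that passing to a quotient preserves and reflects membership in the $\natural$-closure. The only real content is to identify the submodules correctly so that the quotient on which to apply Lemma \ref{proplist}(a) is exactly $Q = R^m/H$.

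For part (a), I would set $N' := H$, $N := R\by + H$, and $M := R^m$. The chain $N' \subseteq N \subseteq M$ is clear, and under the natural identification $M/N' = R^m/H = Q$, the image of $N$ is $(R\by + H)/H = R(\by + H) = Rv$. Applying Lemma \ref{proplist}(a) with $u := \ba$ then gives exactly
$$
\ba \in (R\by + H)^\natural_{R^m} \iff \ba + H \in (Rv)^\natural_Q.
$$

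Part (b) is the same argument with $N := I\by + H$ in place of $R\by + H$. Again $N' \subseteq N \subseteq M$, and the image of $N$ in $Q$ is $(I\by + H)/H = I(\by + H) = Iv$, so Lemma \ref{proplist}(a) yields
$$
\ba \in (I\by + H)^\natural_{R^m} \iff \ba + H \in (Iv)^\natural_Q.
$$

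There is no genuine obstacle here; the lemma is purely a change-of-notation restatement of the quotient compatibility already established. The only care needed is to verify the two elementary module identifications $(R\by + H)/H = Rv$ and $(I\by + H)/H = Iv$ in $Q$, both of which follow immediately from the definition $v = \by + H$ and the fact that $H$ is the kernel of the quotient map $R^m \twoheadrightarrow Q$.
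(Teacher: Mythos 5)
Your proof is correct and is exactly the intended argument: the paper states this lemma without proof precisely because it is the special case of Lemma~\ref{proplist}(a) with $N'=H$, $N=R\by+H$ (resp.\ $I\by+H$), $M=R^m$, together with the identifications $(R\by+H)/H=Rv$ and $(I\by+H)/H=Iv$ in $Q=R^m/H$. Nothing further is needed.
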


We can now achieve the goal we set at the end of Discussion
\ref{modmod}.

\begin{prop} Let $R$ be a complete local domain possessing
a closure operation satisfying Axioms~(1)--(7).
Use Notation \ref{natphhomnote}. Let $Q:=R^m/H$, 
and let $v:=\by + H$.
If $\alpha$ is $\natural$-phantom, then $\alpha'$ is 
$\natural$-phantom.
\end{prop}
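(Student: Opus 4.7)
The plan is to invoke Lemma~\ref{genaxiom7} on the module $Q = R^m/H$ with the partial system of parameters $x_1,\ldots,x_{k+1}$, ideal $I = (x_1,\ldots,x_k)$, and element $v = \by + H$. Specifically, I will produce a homomorphism $\bar f\colon Q \ra R/I$ satisfying $\bar f(v) = x_{k+1} + I$ and $\bx + H \in \ker \bar f$. Once this is in hand, the generalized colon-capturing lemma gives $\bx + H \in (Iv)^\natural_Q$ from the hypothesis (rephrased below) that $\bx + H \in (Rv)^\natural_Q$; part~(b) of the preceding lemma then translates this to $\bx \in (I\by + H)^\natural_{R^m}$, and Proposition~\ref{natphcond} closes out the claim that $\alpha'$ is $\natural$-phantom.

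Constructing $\bar f$ is the one genuine step. The natural candidate is the composition $R^m \ra R \ra R/I$ of projection onto the last coordinate with reduction mod $I$, sending $(r_1,\ldots,r_m)^{\tr} \mapsto r_m + I$. To check that this kills $H$, recall from Discussion~\ref{natphhom} that the last column of the matrix $\nu$ is $(0,\ldots,0,x_1,\ldots,x_k,x_{k+1})^{\tr}$ and that $H$ (by Notation~\ref{natphhomnote}) is generated by the rows of $\nu$ \emph{except the last}; hence every generator of $H$ has last coordinate in $\{0, x_1, \ldots, x_k\} \subseteq I$, so the composition factors through $Q$ as a well-defined map $\bar f$. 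From the definitions of $\by$ and $\bx$, one reads off that $\bar f(v) = x_{k+1} + I$ (the last entry of $\by$ is $x_{k+1}$) and $\bar f(\bx + H) = 0 + I$ (the last entry of $\bx$ is $0$), so indeed $\bx + H \in \ker \bar f$.

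The remainder is pure assembly. The hypothesis that $\alpha$ is $\natural$-phantom gives, via Lemma~\ref{alphaph}, that $\bx \in (R\by+H)^\natural_{R^m}$, which by part~(a) of the preceding lemma is equivalent to $\bx + H \in (Rv)^\natural_Q$. Combining this with $\bx + H \in \ker \bar f$ and feeding into Lemma~\ref{genaxiom7} yields $\bx + H \in (Iv)^\natural_Q$. Part~(b) of the preceding lemma then gives $\bx \in (I\by + H)^\natural_{R^m}$, and Proposition~\ref{natphcond} delivers that $\alpha'$ is $\natural$-phantom. The only mildly nontrivial step is verifying that the ``last coordinate mod $I$'' map is well-defined on $Q$; the rest consists of unwinding the prior lemmas, whose setup in Discussion~\ref{natphhom} and Notation~\ref{natphhomnote} was arranged precisely so that this chain of equivalences closes up.
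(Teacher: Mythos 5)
Your proposal is correct and follows essentially the same route as the paper: the ``last coordinate mod $I$'' map on $Q=R^m/H$, the translation lemmas, and Proposition~\ref{natphcond}. The only cosmetic difference is that you invoke Lemma~\ref{genaxiom7} where the paper notes the induced map $Q\to R/I$ is already surjective and applies Axiom~(7) directly.
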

\begin{proof} By Lemma \ref{alphaph} and the last lemma, 
$\alpha$ is $\natural$-phantom if and only if $\bx + H \in
(Rv)^\natural_Q$. 
Let $\pi:R^m \to R/I$ be the surjection that projects 
an element of $R^m$ onto its last coordinate modulo $I$. 
Since the last entries of the generators of $H$ exactly generate
$I=(x_1,\ldots,x_k)$, we see that $\pi(H) = I$, and so
$\pi$ factors through $Q$. Let $f:Q\to R/I$ be the resulting
surjection. As the last entry of $\bx$ is 0, 
we have that $\bx+H$ is also in the kernel of $f$. By Axiom~(7), 
$\bx +H \in (Iv)^\natural_Q$ and so $\bx \in (I\by +H)^\natural_{R^m}$,
which implies that $\alpha'$ is $\natural$-phantom by 
Proposition~\ref{natphcond}.
\end{proof}

Since $\alpha:R\to M$ being $\natural$-phantom implies that
the map $\alpha':R\to M'$ is $\natural$-phantom, where $M'$ is
a module modification of $M$, we can conclude the following
theorem by using Lemma~\ref{notbad} and 
Discussions \ref{bigCMconstr} and \ref{modmod}.

\begin{thm}
Let $R$ be a complete local domain possessing a closure operation 
that satisfies Axioms~(1)--(7). Then
there exists a balanced big Cohen-Macaulay $R$-module $B$.
\end{thm}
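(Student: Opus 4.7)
The plan is to build $B$ as the direct limit of a systematically enumerated sequence of module modifications, exactly along the blueprint laid out in Discussion~\ref{bigCMconstr}. Start with $M_0 := R$, $w_0 := 1$, and take $\alpha_0 := \id_R : R \to M_0$ as the base $\natural$-phantom extension; this is vacuously $\natural$-phantom because its cokernel is zero, so the corresponding class in $\Ext^1_R(0,R) = 0$ is represented by the zero cocycle. Then, using a standard diagonal enumeration of all relations $x_{k+1} u = x_1 u_1 + \cdots + x_k u_k$ on partial systems of parameters arising in each module along the chain, form successive modifications $M_t \to M_{t+1}$ as in Discussion~\ref{modmod}, with induced maps $\alpha_{t+1} : R \to M_{t+1}$. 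Define $B := \varinjlim_t M_t$.

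The heavy lifting has already been packaged by the preceding results. Lemma~\ref{injmod} guarantees that each $\alpha_{t+1}$ remains injective, so that the $\natural$-phantom hypothesis continues to make sense. The immediately preceding proposition shows that $\natural$-phantomness is preserved under a single modification step; a straightforward induction on $t$ then gives that every $\alpha_t : R \to M_t$ is a $\natural$-phantom extension. Lemma~\ref{notbad} now yields $w_t \notin \mathfrak{m} M_t$ at every stage, which is the entire point of the $\natural$-phantom framework.

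The remaining verifications that $B$ has the required properties are essentially formal. Any relation $x_{k+1} b = x_1 b_1 + \cdots + x_k b_k$ on a partial system of parameters in $B$ lifts to some $M_t$ (since finitely many elements involved live at a common finite stage), is slated to be addressed at some later stage $M_s$ by the enumeration, and is trivialized in $M_s$ hence in $B$; thus every system of parameters of $R$ is a regular sequence on $B$. For $\mathfrak{m} B \neq B$, note that if $w_\infty \in \mathfrak{m} B$ were the image of $1$, then since $\mathfrak{m}$ is finitely generated and the relation lives in finitely many elements, it would already be witnessed in some $M_s$, forcing $w_s \in \mathfrak{m} M_s$ and contradicting Lemma~\ref{notbad}.

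The only delicate piece is the bookkeeping of the enumeration, to ensure every relation on every partial system of parameters in every $M_t$ is eventually addressed. This is a standard diagonal argument from \cite{Ho75} and \cite{HH94} requiring no new ideas. The true conceptual obstacle --- preservation of $\natural$-phantomness under a modification, which is where Axioms~(6) and (7) are essential --- has already been surmounted in the preceding proposition, so the present theorem is the clean assembly of these ingredients.
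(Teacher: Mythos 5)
Your overall assembly is the same as the paper's: induct the preservation of $\natural$-phantomness through successive modifications (base case $\id_R$, which is indeed vacuously $\natural$-phantom since the zero cocycle lies in any closure by Axiom~(1)), apply Lemma~\ref{notbad} to keep $w_t\notin\mathfrak{m}M_t$, and pass to a direct limit as in Discussions~\ref{bigCMconstr} and \ref{modmod}; the paper's one-sentence proof is exactly this, with the limit bookkeeping delegated to \cite{Ho75} and \cite[Discussion 5.15]{HH94}.

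The one step in your write-up that does not work as literally stated is the ``standard diagonal enumeration of all relations.'' A complete local domain of positive dimension is uncountable, so each finitely generated $M_t$ carries uncountably many relations $x_{k+1}u=x_1u_1+\cdots+x_ku_k$ on partial systems of parameters (and new ones arise in the limit), and no countable chain that trivializes one relation per step can address them all; there is simply no such enumeration. The correct bookkeeping, which is what \cite{Ho75} and \cite[Discussion 5.15]{HH94} actually supply, is either to modify with respect to \emph{all} relations of a given stage simultaneously (so that $\omega$ stages suffice, since any relation in the limit is witnessed at a finite stage), or, more in keeping with this paper, to invoke Hochster's criterion that $R$ has a (balanced) big Cohen--Macaulay module provided the image of $1$ lies outside $\mathfrak{m}M$ for \emph{every} module $M$ obtained from $R$ by a finite sequence of parameter modifications. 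The latter formulation is exactly what the $\natural$-phantom induction plus Lemma~\ref{notbad} delivers, and it matters that it is phrased for finite chains: Lemma~\ref{notbad} applies only to finitely generated $M$, which the modules along a finite chain are, whereas the ``all relations at once'' modifications are not. So your conceptual skeleton matches the paper; you should just replace the countable diagonalization with the finite-chain criterion (or the simultaneous-modification limit) when citing the classical construction. The remaining points --- injectivity via Lemma~\ref{injmod}, and the finite-stage witness argument showing $\mathfrak{m}B\neq B$ because $\mathfrak{m}$ is finitely generated --- are fine.
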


\section{Big Cohen-Macaulay Modules Imply the Axioms}

In this section, assume that $(R,\mathfrak{m})$ is a complete 
local domain and that
$B$ is a balanced big Cohen-Macaulay $R$-module. We will define a
closure operation for $R$-modules, based on $B$, that will satisfy
Axioms \ref{theaxioms}.

\begin{defn}
Given $R$ and $B$ as above and finitely generated $R$-modules
$N\subseteq M$, define $N^\natural_M$ to be the set of all $u\in M$
such that for all $b\in B$ we have 
$b\tensor u\in\im(B\tensor N\to B\tensor M)$.
\end{defn}

\begin{thm}
The closure operation $N^\natural_M$
defined above in terms of the balanced big Cohen-Macaulay
$R$-module $B$ satisfies Axioms \ref{theaxioms}.
\end{thm}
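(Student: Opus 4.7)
The plan is to verify each of the seven axioms directly from the definition: $u \in N^\natural_M$ iff $b \tensor u$ lies in $\im(B \tensor N \to B \tensor M)$ for every $b \in B$. Axioms~(1)--(4) should fall out from the bilinearity and functoriality of $B \tensor -$ with no use of the Cohen-Macaulay hypothesis. Axiom~(2) takes one extra line: if $b \tensor u = \sum b_i \tensor v_i$ with each $v_i \in N^\natural_M$, then each $b_i \tensor v_i$ already lies in $\im(B \tensor N \to B \tensor M)$, so the sum does too. Axiom~(5) uses right-exactness: tensoring $0 \to N \to M \to M/N \to 0$ yields an exact sequence $B \tensor N \to B \tensor M \to B \tensor (M/N) \to 0$, so $u+N \in 0^\natural_{M/N}$ implies $b \tensor u \in \im(B \tensor N \to B \tensor M)$ for every $b$, i.e., $u \in N^\natural_M = N$.

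Axiom~(6) is where the big Cohen-Macaulay hypothesis first enters. Under $B \tensor R \isom B$, the condition $u \in \mathfrak{m}^\natural_R$ becomes $ub \in \mathfrak{m}B$ for all $b$; picking $b \in B \setminus \mathfrak{m}B$ (possible since $\mathfrak{m}B \neq B$) and supposing $u$ is a unit gives $b = u^{-1}(ub) \in \mathfrak{m}B$, a contradiction, so $\mathfrak{m}^\natural_R = \mathfrak{m}$. For $0^\natural_R = 0$, I would use that in a complete local domain every nonzero $u$ extends to a system of parameters (since $(0)$ is the unique minimal prime), hence acts as a nonzerodivisor on $B$; combined with $B \neq 0$, this forces $u = 0$.

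The main obstacle, and the step for which Axiom~(7) was evidently designed, is Axiom~(7) itself. Given $u \in (Rv)^\natural_M \cap \ker f$, the first key observation is that $\im(B \tensor Rv \to B \tensor M) = \{b' \tensor v : b' \in B\}$, since every element of the form $\sum b_i \tensor r_i v$ can be collapsed to $(\sum r_i b_i) \tensor v$. So the closure condition furnishes, for each $b \in B$, some $b' \in B$ with $b \tensor u = b' \tensor v$ in $B \tensor M$. Next I apply $1 \tensor f : B \tensor M \to B \tensor (R/J) \isom B/JB$: the left side collapses to $0$ (since $f(u) = 0$), while the right becomes $x_{k+1} b' + JB$ (since $f(v) = x_{k+1} + J$), forcing $x_{k+1} b' \in JB$. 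The balanced big Cohen-Macaulay hypothesis now intervenes crucially: $x_{k+1}$ is a nonzerodivisor on $B/JB$, so $b' \in JB$. Writing $b' = \sum_i x_i c_i$ and rearranging the tensor, $b \tensor u = \sum c_i \tensor x_i v$, which lies in $\im(B \tensor Jv \to B \tensor M)$; thus $u \in (Jv)^\natural_M$, completing the verification.
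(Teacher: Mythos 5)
Your proposal is correct and follows essentially the same route as the paper: direct verification of Axioms~(1)--(5) from properties of $B\tensor -$, the same contradiction with $\mathfrak{m}B\neq B$ for Axiom~(6), and for Axiom~(7) the same collapsing of $b\tensor u$ to $b'\tensor v$, passage to $B/JB$, and use of the regular-sequence property to get $b'\in JB$. The only differences are that you spell out two steps the paper leaves implicit (that $\im(B\tensor Rv\to B\tensor M)$ consists of simple tensors $b'\tensor v$, and that a nonzero element of the domain $R$ is a nonzerodivisor on $B$ via extension to a system of parameters), which is fine.
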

~
\begin{proof}
~
\begin{nlist}

\item The claim that $N\subseteq N^\natural_M$ is clear 
and basic tensor product
properties imply that $N^\natural_M$ is a submodule of $M$.

\item By Axiom~(1), $N^\natural_M\subseteq (N^\natural_M)^\natural_M$. Suppose
that $u\in (N^\natural_M)^\natural_M$, and let $b\in B$. Then 
$b\tensor u \in \im(B\tensor N^\natural_M\to B\tensor M)$, which implies that
$b\tensor u = \sum_j b_j\tensor v_j$ in $B\tensor M$, where $b_j\in B$ and
$v_j \in N^\natural_M$. Since each $b_j\tensor v_j \in 
\im(B\tensor N\to B\tensor M)$,
we have $b\tensor u \in \im(B\tensor N\to B\tensor M)$. Thus, 
$u\in N^\natural_M$.

\item Given $N\subseteq M\subseteq W$, the inclusion $N^\natural_W \subseteq 
M^\natural_W$ follows from the fact that 
$\im(B\tensor N\to B\tensor W)\subseteq \im(B\tensor M\to B\tensor W)$.

\item Given $N\subseteq M$ and $f:M\to W$, the inclusion
$f(N^\natural_M) \subseteq f(N)^\natural_W$ follows from the fact that
the map $B\tensor f$ maps $\im(B\tensor N\to B\tensor M)$ to
$\im(B\tensor f(N)\to B\tensor W)$.

\item Suppose that $N^\natural_M=N$ and that 
$\overline{u} \in 0^\natural_{M/N}$.
Then for all $b\in B$, we have $b\tensor \overline{u} = 0$ in 
$B\tensor M/N \isom (B\tensor M)/\im(B\tensor N\to B\tensor M)$. Thus, 
$b\tensor u \in \im(B\tensor N\to B\tensor M)$ so that $u\in N^\natural_M=N$.
Therefore, $\overline{u} = 0$ in $M/N$.

\item Suppose that $x\in \mathfrak{m}^\natural_R \setminus \mathfrak{m}$.
Then $x$ is a unit in $R$, and for all $b\in B$ we have $xb\in \mathfrak{m}B$.
Therefore, $x$ annihilates $B/\mathfrak{m}B$, but since $x$ is a unit, 
$B = \mathfrak{m}B$, contradicting the fact that $B$ is a balanced big
Cohen-Macaulay $R$-module. 

Now, suppose that $x\in 0^\natural_R$. Then $xb=0$ in $B$ for all $b\in B$.
Since $R$ is a domain and $B$ is a balanced big Cohen-Macaulay $R$-module,
$x$ cannot annihilate nonzero elements of $B$ unless $x=0$.

\item Suppose $f:M\to R/I$ (we do not need to assume it is surjective), 
where $x_1,\ldots,x_{k+1}$
is a partial system of parameters for $R$, $I = (x_1,\ldots,x_k)$, and 
$f(v)=x_{k+1}+I$. 
Let $u\in (Rv)^\natural_M \cap \textup{ker}\, f$, and let $b\in B$. Then
$f(u) = 0$, and $b\tensor u = c_b \tensor v$ in $B\tensor M$. Therefore,
$c_b \tensor f(v) = 0$ in $B\tensor R/I$, which implies that
$c_b x_{k+1} + IB = IB$ in $B/IB$. Hence, $c_b x_{k+1} \in IB$. Since $B$
is balanced big Cohen-Macaulay, $c_b \in IB$. Thus, 
$$
b\tensor u \in \im(IB\tensor Rv\to B\tensor M)
= \im(B\tensor Iv\to B\tensor M),
$$
and so $u\in (Iv)^\natural_M$.

\end{nlist}~
\end{proof}

\section{Common Closure Operations and the Axioms}

In the final section we compare tight closure, plus closure, Frobenius
closure, and solid closure with the axioms. 
We refer the reader to 
\cite{HH90}, especially Section~8, for the definitions and notation involved
in tight closure of modules.
See \cite{HH92} and \cite{Sm94} for the details on
plus closure and its properties. See \cite[Section 7]{HH94} for information
on Frobenius closure, and see \cite{Ho94} for details
on solid closure.

It is easy to see that any closure operation that can be defined in terms
of tensor products with $R$-modules or $R$-algebras, e.g.,
plus closure, Frobenius closure, or solid closure, will
satisfy Axioms~(1)--(5). These three closure operations
also satisfy Axiom~(6) as each is contained
in the integral closure. The crucial axiom to verify is thus Axiom~(7). 

\begin{exam}
Plus closure
in prime characteristic $p$ satisfies Axiom~(7) as it is defined 
in terms of the balanced big Cohen-Macaulay $R$-algebra $R^+$, the
absolute integral closure of $R$. 
\end{exam}

\begin{exam}
Frobenius closure does not satisfy Axiom~(7) in general as it does not 
even satisfy
colon-capturing. For example, let $K$ be a field of prime characteristic
$p \equiv 1 \pmod{3}$. Set $R = K[X,Y,Z]/(X^3+Y^3+Z^3) = K[x,y,z]$, 
the cubical cone, and set $S = K[s,t]$. Then let $T = K[xs,ys,zs,xt,yt,zt]$
be the Segre product of $R$ and $S$. After localization at the maximal ideal
and completion, $\wh{T}$ is a complete local domain that is not 
Cohen-Macaulay. It can be checked that $ys$, $xt$, and $xs-yt$ form
a system of parameters, but the relation 
$(zs)(zt)(xs - yt) = (zs)^2(xt) - (zt)^2(ys)$ shows that we do not
have a regular sequence. (See \cite[pp. 15--16]{Ho07} for the details.)
On the other hand, $R$ is $F$-pure (see \cite[pp. 162, 269]{Ho07})
and so is $R[s,t]$. Since $T$ is a direct summand of $R[s,t]$ as
a $T$-module, $T$ and $\wh{T}$ are also $F$-pure. Therefore,
$I^F= I$ for all ideals in $\wh{T}$ and so colon-capturing
cannot hold in $\wh{T}$ since it is not Cohen-Macaulay.

\end{exam}

\begin{exam}
In positive characteristic, solid
closure and tight closure coincide for complete local domains 
\cite[Theorem 8.6]{Ho94}.
Proposition \ref{TCAxiom7} below will then show that solid closure satisfies
Axiom~(7) in this case. In equal characteristic 0, solid closure
contains the various notions of tight closure in equal characteristic 0
\cite[Theorem 11.4]{Ho94}. Since those tight closure operations capture
colons, solid closure does as well. Whether or not solid closure obeys
Axiom~(7) in characteristic 0 is unknown.
In mixed characteristic, the situation is more mysterious.
It is not even known whether
a complete local domain having all parameter ideals solidly closed
must be Cohen-Macaulay for dimension greater than two, 
and so colon-capturing may not hold.
\end{exam}

\begin{exam}
In positive characteristic, tight closure
satisfies all of Axioms
\ref{theaxioms} for  a complete local domain $R$. 
Axioms~(1), (2), and (3) are given in \cite[Proposition
8.5(a), (e), (b)]{HH90}. Axiom~(5) is found in \cite[Remark 8.4]{HH90}.
Axiom~(6) is a consequence of \cite[Theorem 5.2]{HH90}
since $0$ and $\mathfrak{m}$ are integrally closed in $R$.  

For lack of a convenient reference, we will outline the proof for Axiom~(4) 
before proving Axiom~(7), the
generalized colon-capturing property.
\end{exam}

\begin{lemma} Let $R$ be any ring of prime characteristic $p>0$. Then
Axiom~\textnormal{(4)} holds, \emph{i.e.}, 
given $f:M\to W$ with $N\subseteq M$, we have
$f(N^*_M) \subseteq f(N)^*_W$.
\end{lemma}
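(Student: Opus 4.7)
The plan is to unwind the module-theoretic definition of tight closure and then chase the containment through the Frobenius functor applied to $f$. Recall from \cite[Section~8]{HH90} that for finitely generated $N\subseteq M$, an element $u\in M$ lies in $N^*_M$ iff there exists a nonzero $c\in R$ (a test element) such that for all $e\gg 0$ the element $c\otimes u\in \Fe(M)$ lies in $N^{[q]}_M:=\im(\Fe(N)\to \Fe(M))$, with $q=p^e$.

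First I would fix $u\in N^*_M$ together with a witnessing $c$. I would then apply the $R$-linear map $\Fe(f):\Fe(M)\to \Fe(W)$ to both sides of the containment $c\otimes u\in N^{[q]}_M$. By construction $\Fe(f)=1\otimes f$, so $\Fe(f)(c\otimes u)=c\otimes f(u)$; crucially, the test element $c$ is unchanged.

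Next I would show that $\Fe(f)$ carries $N^{[q]}_M$ into $f(N)^{[q]}_W$. This is just functoriality: the composition $N\hookrightarrow M \xrightarrow{f} W$ factors as $N\twoheadrightarrow f(N)\hookrightarrow W$, so after applying the right-exact functor $\Fe$ the image of $\Fe(N)\to \Fe(W)$ equals the image of $\Fe(f(N))\to \Fe(W)$, which is by definition $f(N)^{[q]}_W$. Combining, $c\otimes f(u)\in f(N)^{[q]}_W$ for all $e\gg 0$ with a single $c$, which is precisely the statement that $f(u)\in f(N)^*_W$.

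I do not expect a real obstacle here; the proof is essentially the observation that $\Fe$ is a functor on $R$-modules, so test elements transfer along $f$ without modification. The only point requiring care is that the same $c$ must work simultaneously for every $e$ on both sides, and this is automatic since the same $\Fe(f)$ is applied for each $e$.
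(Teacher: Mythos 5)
Your argument is correct and is essentially the paper's own proof: apply $\Fe(f)$ to the containment $c\otimes u\in \im(\Fe(N)\to\Fe(M))$ and use functoriality of $\Fe$ to see that the same $c$ witnesses $f(u)\in f(N)^*_W$. One small caution: since the lemma is stated for an arbitrary ring of characteristic $p$, the witness $c$ must lie outside every minimal prime (it need not be a test element, and ``nonzero'' alone does not suffice in a non-domain), but this does not affect the substance of your argument.
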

\begin{proof}
If $u\in N^*_M$, then there exists a $c\in R$ not in 
any minimal prime such that
$c\otimes u \in \im(\Fe(N)\to \Fe(M))$. Thus, 
$c\otimes f(u) \in \im(\Fe(f(N))\to \Fe(W))$,
so $f(u)\in f(N)^*_W$.
\end{proof}

\begin{prop}\label{TCAxiom7}
Let $R$ be a complete local domain of prime characteristic $p>0$. 
Axiom~\textnormal{(7)}, the generalized colon-capturing property, 
holds for tight closure.
\end{prop}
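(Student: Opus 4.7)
The strategy is to unravel the tight closure hypothesis on $u$ through the iterated Frobenius, use the map $f$ to convert it into a colon condition in $R$, and then close up via the classical colon-capturing for tight closure together with a test element.

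Concretely: let $u\in (Rv)^*_M\cap\ker f$ and fix $c\in R^\circ$ witnessing $u\in (Rv)^*_M$. For every $e\geq 0$, $c\otimes u$ lies in $\im(\Fe(Rv)\to \Fe(M)) = R\cdot(1\otimes v)$, so there is some $r_e\in R$ with $c\otimes u = r_e(1\otimes v)$ in $\Fe(M)$. I would then apply $\Fe$ to the map $f:M\to R/J$ to obtain $\Fe(f):\Fe(M)\to \Fe(R/J)\cong R/J^{[p^e]}$, under which $1\otimes f(v)$ corresponds to $x_{k+1}^{p^e}+J^{[p^e]}$. Evaluating $\Fe(f)$ on the relation, the left side becomes $c\otimes f(u)=0$ and the right side becomes $r_e x_{k+1}^{p^e}+J^{[p^e]}$; hence $r_e\in (J^{[p^e]}:_R x_{k+1}^{p^e})$.

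Since $x_1^{p^e},\ldots,x_{k+1}^{p^e}$ remains a partial system of parameters, the standard colon-capturing property for tight closure gives $r_e\in (J^{[p^e]})^*$. Picking a test element $c^*\in R^\circ$ (which exists for any complete local domain of prime characteristic), the defining property of a test element (taking the exponent to be $0$) yields $c^* r_e\in J^{[p^e]}$. Multiplying the original relation by $c^*$ then produces $(c^*c)\otimes u = c^* r_e(1\otimes v)\in J^{[p^e]}\cdot(1\otimes v) = \im(\Fe(Jv)\to \Fe(M))$ for every $e\geq 0$. Since $c^*c\in R^\circ$, this exhibits $u\in (Jv)^*_M$, as required.

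No step here is a genuine obstacle, since both ingredients beyond the setup (existence of test elements and colon-capturing for parameter ideals under tight closure) are standard for complete local domains of characteristic $p$. The one part worth care is the bookkeeping of the Frobenius twist: tracking the $p^e$-th power that $x_{k+1}$ picks up when pushed through $\Fe(f)$, and confirming that the fixed multiplier $c^*c$ — independent of $e$ — absorbs the $e$-dependent witnesses $r_e$ uniformly, so that a single element of $R^\circ$ certifies membership in $(Jv)^*_M$.
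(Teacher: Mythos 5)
Your proof is correct and follows essentially the same route as the paper's: push the Frobenius relation $c\otimes u = r_e(1\otimes v)$ through $f$, use $f(u)=0$ to get $r_e\in J^{[p^e]}:x_{k+1}^{p^e}$, invoke classical colon-capturing and a test element to land $c^*r_e\in J^{[p^e]}$, and conclude with the fixed multiplier $c^*c$. The only cosmetic difference is that the paper takes $c$ itself to be a test element from the start (so the final multiplier is $c^2$ instead of $c^*c$).
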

\begin{proof}
Let $f:M\to R/I$ be a homomorphism, where $x_1,\ldots,x_{k+1}$
is a partial system of parameters for $R$, $I = (x_1,\ldots,x_k)$,
and $f(v)=x_{k+1}$. 
Suppose that $u\in (Rv)^*_M \cap \textup{ker}\, f$. We need
to show that $u\in (Iv)^*_M$. Let $c$ be a test
element for $R$, which exists by \cite[Corollary 6.24]{HH90}.
Then for all $q=p^e$, we have
$cu^q \in (Rv)^{[q]}_M$. Applying $f$, we have
$cf(u)^q = r_qx_{k+1}^q$ in $R/I^{[q]}$. Since $f(u)=0$, 
$r_qx_{k+1}^q \in I^{[q]}$ so that $r_q \in I^{[q]}: x_{k+1}^q$.
By regular colon-capturing for tight closure \cite[Theorem~4.7]{HH90}, 
$r_q \in (I^{[q]})^*$ and so $cr_q\in I^{[q]}$ since $c$ is a test element.
Therefore, $c^2u^q = cr_qx_{k+1}^q\in (Iv)^{[q]}_M$ and so
$u\in (Iv)^*_M$.
\end{proof}

\begin{exam}
Using the usual reduction to characteristic $p$ arguments
(see \cite{TCZ}, specifically Theorem 4.1.7 on colon-capturing),
one can deduce that equal characteristic 0 tight closure notions
also obey Axioms~(1)--(7).
\end{exam}

\section*{Acknowledgments}

My thanks to Mel Hochster for several helpful conversations during
the preparation of this manuscript and for pointing out the example
for an $F$-pure but non-Cohen-Macaulay complete local domain.
I also thank the anonymous referee for many helpful suggestions that
improved the exposition of the paper.

\end{document}